\newtheorem{theorem}{Theorem}[section]
\newtheorem{lemma}[theorem]{Lemma}
\newtheorem{proposition}[theorem]{Proposition}
\newtheorem{remark}[theorem]{Remark}
 \newcommand{\scalprod}[2]{\left\langle #1,#2 \right\rangle}
\begin{document}

\title{Quiet sigma delta quantization, and global convergence for a class of asymmetric piecewise affine maps}

\author{Rachel Ward  \thanks{R. Ward is an NSF postodoctoral fellow at the Courant Institute, New York University, New York, New York.  e-mail: rward@cims.nyu.edu}}

\maketitle

\begin{abstract}
\noindent In this paper, we introduce a family of second-order sigma delta quantization schemes for analog-to-digital conversion which are `quiet': quantization output is guaranteed to fall to zero at the onset of vanishing input.  In the process, we prove that the origin is a globally attractive fixed point for the related family of asymmetrically-damped piecewise affine maps.  Our proof of convergence is twofold:  first, we construct a trapping set using a Lyapunov-type argument; we then take advantage of the asymmetric structure of the maps under consideration to prove convergence to the origin from within this trapping set.
\\
\\
{\bf Key words:} piecewise affine maps, attractivity, trapping set, Lyapunov function, analog-to-digital conversion, sigma delta modulation, quiet quantization, idle tones 

\end{abstract}

\section{Introduction}
Analog-to-digital conversion is the study of accurate and tractable methods for the approximation of real-valued signals using a finite alphabet.  It is of great importance as many signals of interest such as audio, are naturally produced in analog form, while it is becoming increasingly efficient to store and manipulate information in digital format.

The process of analog-to-digital conversion usually consists of two parts: \emph{sampling} and \emph{quantization}.  Sampling consists of converting the continuous-time signal of interest $f(t)$ into a discrete-time signal $f(t_n)$, and quantization is the process of mapping the discrete-time signal $f(t_n)$ into a sequence of discrete values, in such a way that the original function $f(t)$ can be reconstructed, albeit imperfectly, from these discrete values at a later time.

\paragraph{Sampling.}

In the setting of analog-to-digital conversion, the signal of interest $f(t)$ is often modeled as a bounded, \emph{bandlimited} function.  Specifically, the signal is assumed to belong to the space ${\cal B}_{\Omega}$ of real-valued continuous functions that are bounded in $L^{\infty}(\mathbb{R})$, and whose Fourier transforms (as distributions) have support contained in $[-\Omega/2, \Omega/2]$ for a known bandwidth $\Omega$.  For example, speech signals can be modeled as bandlimited functions whose bandwidth $\Omega$ is $4$ KHz, and audio signals in general are well-modeled as bandlimited functions with bandwidth $20$ KHz.  For ease of presentation, let us fix $\Omega = 1$ in the sequel; all of our results can be extended to general bandwidths by change of variables.  For the Fourier transform, we shall use the normalization 
\begin{equation}
\widehat{f}(\omega) := \int_{-\infty}^{\infty} f(t) \exp{(-2 \pi i\omega t)} dt
\label{fourier}
\end{equation}
for $f \in L^1(\mathbb{R})$, and extended to the space of tempered distributions in the usual way.

For bandlimited functions $f$, the low frequency content of the sequence of samples $\big( f (\frac{n}{\lambda}) \big)_{n \in \mathbb{Z}}$ is the function itself; therefore, such functions can be reconstructed using a low-pass filter.  In mathematical terms, this intuition corresponds to the Shannon-Nyquist sampling theorem: if $\lambda > 1$, then any function $f \in {\cal{B}}_1$ can be recovered as a weighted sum of translates of an averaging kernel $g \in L^1(\mathbb{R})$ via the formula
\begin{equation}
f(t) = \frac{1}{\lambda} \sum_{n \in \mathbb{Z}} f \Big( \frac{n}{\lambda} \Big) g \Big( t - \frac{n}{\lambda} \Big), 
\label{reconstruct}
\end{equation}  
where $g$ is any bounded and continuous function whose Fourier transform $\widehat{g}$ satisfies
\begin{eqnarray}
\label{g}
\widehat{g}(\omega) &=&  \left\{ \begin{array}{ll}1, & \textrm{if } | \omega | \leq 1/2  \\
0, & \textrm{if } | \omega | \geq \lambda_0/2
\end{array} \right.
\end{eqnarray}
for some arbitrary $\lambda_0$ with $\lambda \geq \lambda_0 > 1$.  


\paragraph{Quantization.}  Given a sequence of samples $\big( f (\frac{n}{\lambda}) \big)_{n \in \mathbb{Z}}$ associated to a bandlimited function $f \in {\cal B}_1$, a $K$-level quantization scheme assigns a sequence of \emph{quantized values} $q_n^{\lambda}$ from an alphabet $\cal{A}_K$ of size $| {\cal A}_K | = K$ in such a way that the function
\begin{equation}
\label{eq:fapprox}
\widetilde{f}_{\lambda}(t)= \frac{1}{\lambda} \sum_{n \in \mathbb{Z}} q^{\lambda}_n g \big( t - \frac{n}{\lambda} \big)
\end{equation}
serves as a good approximation to $f(t) = \frac{1}{\lambda} \sum_{n \in \mathbb{Z}} f \big( \frac{n}{\lambda} \big) g \big( t - \frac{n}{\lambda} \big)$.
A natural choice for the discrete coefficient $q_n^{\lambda}$ in \eqref{eq:fapprox} is the truncated $K$-$1$ term binary expansion of the amplitude $| f^{\lambda}_n| $, multiplied by the sign of $f^{\lambda}_n$ - such \emph{binary} quantization schemes are an industry standard for digitizing audio signals.  However, binary quantization suffers from various implementation difficulties and disadvantages in practice, mostly related to the cost of building analog circuits that can carry out binary expansions accurately to many digits; in practice, it is not always the quantization scheme of choice among engineers.  

In many applications,  \emph{oversampled coarse quantization} is instead preferred, where a fixed number of levels --- sometimes as few as two levels, $q^{\lambda}_n \in \{-1,1\}$ --- is allocated to each sample $f^{\lambda}_n$ at the more tolerated expense of very high sampling rate compared to the rate $\lambda \approx 1$ which is sufficient for binary quantization.  From the viewpoint of circuit engineering, oversampled coarse quantization is associated to low-cost analog hardware, because increasing the sampling rate is cheaper than refining the quantization.   Further advantages of oversampled coarse quantization methods include a built-in redundancy and robustness against errors resulting from imperfections in the analog circuit implementation.  This robustness comes as a consequence of the more `democratic' distribution of bit significance in the reconstruction formula \eqref{eq:fapprox}; we refer the reader to \cite{DC02} for more details. 

\subsection{Sigma Delta quantization}

\noindent  In sigma delta ($\Sigma \Delta$)  quantization, one of the most widely-used oversampled quantization methods in practice, $q^{\lambda}_n$ is dynamically updated as a function of previous $q^{\lambda}_k$ and $f^{\lambda}_k$ in such a way that the frequency content of the quantization error is pushed to high frequencies; these high-frequency error components are then cancelled out by the convolution, \eqref{eq:fapprox}, which acts as a low-pass filter.   In mathematical terms, $q^{\lambda}_n$ being a high-pass sequence means that the difference $f^{\lambda}_n - q^{\lambda}_n$ is the $m$th order difference of a bounded auxiliary sequence $v_n$.   Precisely, for an $m$th order $\Sigma \Delta$ quantization scheme, the sequence of coefficients $q_n^{\lambda} \in {\cal A}$ is chosen such that
\begin{equation}
\label{eq:relation}
f^{\lambda}_n - q^{\lambda}_n = ( \Delta^m v)_n := \sum_{l=0}^m (-1)^l {m \choose l} v_{n+m-l}, \hspace{5mm} n \in \mathbb{Z},
\end{equation}
The quantization error associated to such quantization schemes may be bounded as follows: 

\begin{proposition}
\label{prop:dd1}
Fix $f \in {\cal B}_1$, and furthermore suppose that $\| f \|_{\infty} \leq \alpha < 1$ for some $\alpha < 1$. Fix a filter $g$ satisfying the assumptions \eqref{g}, and suppose further that $g$ and its first $m$ derivatives belong to  $L^1(\mathbb{R})$.  Fix an oversampling ratio $\lambda \geq \lambda_0$, and let ${\cal A}$ be a discrete alphabet consisting of equispaced values with endpoints $+1$ and $-1$.   Suppose that there exists a sequence of coefficients $q^{\lambda}_n \in {\cal A}$ and a bounded state sequence $(v_n)_{n =0}^{\infty}$ for which the $m$th order relation \eqref{eq:relation} is satisfied.  Then one may set $t_0 = t_0(m,\lambda) > 0$ sufficiently large that for all $t \geq t_0$,
\begin{equation}
\label{order}
\Big|  f(t) - \frac{1}{\lambda} \sum_{n = 1}^{\infty} q^{\lambda}_n  g \Big( t - \frac{n}{\lambda} \Big) \Big| \leq  C \lambda^{-m};
\end{equation}
the constant $C$ appearing above depends only on $\| v \|_{\infty}$ and on $\| g \|_{L^1(\mathbb{R})}, \| g^{(1)} \|_{L^1(\mathbb{R})},  ...  \| g^{(m)} \|_{L^1(\mathbb{R})}$.
\end{proposition}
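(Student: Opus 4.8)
The plan is to estimate the reconstruction error by substituting the $m$th order relation \eqref{eq:relation} into the difference $f(t) - \frac{1}{\lambda}\sum_n q_n^\lambda g(t - n/\lambda)$, and then to move the $m$ finite differences off the state sequence $v$ and onto the filter $g$ by summation by parts. First I would write, using \eqref{reconstruct} and \eqref{eq:fapprox},
\begin{equation}
f(t) - \frac{1}{\lambda}\sum_{n} q_n^\lambda\, g\Big(t - \frac{n}{\lambda}\Big) = \frac{1}{\lambda}\sum_{n} \big(f_n^\lambda - q_n^\lambda\big)\, g\Big(t - \frac{n}{\lambda}\Big) = \frac{1}{\lambda}\sum_{n} (\Delta^m v)_n\, g\Big(t - \frac{n}{\lambda}\Big).
\end{equation}
Applying summation by parts $m$ times transfers the difference operator to $g$: the sum becomes $\frac{1}{\lambda}\sum_n v_n (\Delta^m g_\lambda)_n$, where $g_\lambda(\cdot) = g(t - \cdot/\lambda)$ and the boundary terms vanish (or are absorbed into the choice of $t_0$) because $v$ is bounded and $g$ together with its derivatives decays at infinity in $L^1$. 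This is the standard Daubechies–DeVore device, and I would cite \cite{DD03} for the mechanics.

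Next I would bound $(\Delta^m g_\lambda)_n$ by a derivative. Since the $m$th finite difference of a smooth function at spacing $1/\lambda$ is controlled by its $m$th derivative, $\big|(\Delta^m g_\lambda)_n\big| \leq \lambda^{-m}\sup_{\xi} \big|g^{(m)}(\xi)\big|$ pointwise, but more usefully one has the integral bound
\begin{equation}
\sum_{n}\Big|(\Delta^m g_\lambda)_n\Big| \leq \lambda^{-m}\,\big\|g^{(m)}\big\|_{L^1(\mathbb{R})} \cdot \lambda,
\end{equation}
which comes from writing the $m$th difference as an $m$-fold integral of $g^{(m)}$ over a box of side $1/\lambda$ and applying Fubini. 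Combining, the error is at most $\frac{1}{\lambda}\cdot \|v\|_\infty \cdot \lambda \cdot \lambda^{-m}\|g^{(m)}\|_{L^1} = \|v\|_\infty\,\|g^{(m)}\|_{L^1(\mathbb{R})}\,\lambda^{-m}$, giving \eqref{order} with $C$ depending only on $\|v\|_\infty$ and the $L^1$ norms of $g$ and its derivatives (the lower-order derivative norms enter through the boundary terms).

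The main obstacle is handling the one-sided nature of the sum: the state sequence is indexed $n = 0, 1, 2, \dots$ rather than over all of $\mathbb{Z}$, so the reconstruction $\frac{1}{\lambda}\sum_{n\geq 1} q_n^\lambda g(t - n/\lambda)$ omits the tail $n \leq 0$, and the summation-by-parts produces boundary contributions near $n = 0$. This is precisely why the statement requires $t \geq t_0(m,\lambda)$: for $t$ large, the omitted terms and the boundary terms involve $g$ and its derivatives evaluated at large arguments $t - n/\lambda$ with $n \leq 0$ or $n$ small, where the $L^1$-decay of $g^{(j)}$ forces these contributions below $C\lambda^{-m}$. I would make this quantitative by choosing $t_0$ so that $\int_{|\xi| \geq t_0/2}\big(|g(\xi)| + \dots + |g^{(m)}(\xi)|\big)\,d\xi$ is suitably small, and then splitting the error into the "interior" part (controlled by the summation-by-parts estimate above) and the "boundary/tail" part (controlled by this integral tail bound). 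Everything else is routine.
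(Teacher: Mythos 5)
Your proposal is correct and follows essentially the same route as the paper, which gives no proof of Proposition \ref{prop:dd1} itself but defers to \cite{DD03}: the argument there is exactly your decomposition, namely substituting \eqref{eq:relation}, summing by parts $m$ times to move the differences onto $g(t-\cdot/\lambda)$, bounding the summed $m$th differences by $\lambda^{-(m-1)}\|g^{(m)}\|_{L^1(\mathbb{R})}$ via the $m$-fold integral representation and Fubini, and absorbing the one-sided boundary and tail contributions by taking $t_0(m,\lambda)$ large. No substantive difference from the referenced proof.
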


For a rigorous proof of Proposition \ref{prop:dd1}, we refer the reader to \cite{DD03}.   Note that in the error estimate \eqref{order}, the index $n$ ranges over the positive integers only; this change of setup yields no difficulties, given that a filter $g$ satisfying \eqref{g} can be made to be well-localized in time by requiring that its Fourier transform be sufficiently smooth, see \cite{S} for more details.

\begin{remark}
We shall refer to any quantization scheme satisfying an error estimate of the form \eqref{order} as an \emph{$m$th order quantization scheme}.  
\end{remark}


\paragraph{Construction of $\Sigma \Delta$ quantization schemes.}

We now turn to the issue of existence of quantization schemes satisfying the difference relation \eqref{eq:relation}.  Although high-order $\Sigma \Delta$ quantizers have been implemented in practice for many years \cite{SchTemes}, the construction of $\Sigma \Delta$ schemes of arbitrary order $m \geq 1$ for which the state sequence $(v_n)_{n \in \mathbb{N}}$ is guaranteed to remain bounded has been only recently achieved by Daubechies and DeVore in \cite{DD03}:

\begin{proposition}[Daubechies and DeVore]
\label{prop:dd}
Suppose that $f$, $g$, and ${\cal A}$ satisfy the assumptions of Proposition \ref{prop:dd1}.  Take $\lambda > 1$, and consider a quantizer function $Q: [-\alpha, \alpha] \rightarrow {\cal A}$ that satisfies
\begin{eqnarray}
\label{eq:quant}
Q(u) &=& \textrm{sign}(u) \hspace{4mm} \textrm{for } |u| \geq 1/2. \nonumber 
\end{eqnarray}
Then there exist admissible functions $F: \mathbb{R}^m \rightarrow \mathbb{R}$ for which the recursion 
\begin{eqnarray}
\label{eqn:sd}
q_n &=& Q \Big( F(f_n, u_{n-1}^{(1)}, ..., u_{n-1}^{(m)}) \Big) \nonumber \\
u_n^{(1)} &=& u_{n-1}^{(1)} + f_n - q_n \nonumber \\
u_n^{(j)} &=& u_{n-1}^{(j)} + u_n^{(j-1)}, \hspace{5mm} j=2, ..., m 
\end{eqnarray}
with initial conditions $u_0^{(j)} = 0$ for $j=1, ..., m$, generates a sequence $(v_n)_{n \in \mathbb{N}} := ( u_n^{(m)} )_{n \in \mathbb{N}}$ that is bounded and satisfies the finite difference equation \eqref{eq:relation}.

\end{proposition}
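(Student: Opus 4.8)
The statement really has two independent halves, and I would prove them separately. The first half --- that the sequence $(v_n) = (u_n^{(m)})$ generated by \eqref{eqn:sd} satisfies the finite difference equation \eqref{eq:relation} --- holds for \emph{any} admissible choice of the $q_n \in \mathcal{A}$ whatsoever, and is pure bookkeeping: from $u_n^{(1)} - u_{n-1}^{(1)} = f_n - q_n$ and $u_n^{(j)} - u_{n-1}^{(j)} = u_n^{(j-1)}$ one telescopes to obtain $f_n - q_n = \big(m\text{-fold backward difference of } u^{(m)}\big)_n$, and since $u_0^{(j)} = 0$ a shift of index rewrites this in the forward-difference normalization of \eqref{eq:relation}. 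So the whole content of the proposition is the second half, the \emph{existence} of an admissible $F$ for which the state stays bounded: it suffices to exhibit one admissible $F$ and a constant $C = C(m)$ with $\|u^{(j)}\|_{\infty} \le C$ for every $j = 1, \dots, m$, since then in particular $\|v\|_{\infty} = \|u^{(m)}\|_{\infty} \le C$.

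For the boundedness, my first move would be to reduce ``bound all $m$ accumulators'' to ``bound one good scalar''. Unrolling \eqref{eqn:sd} gives $u_n^{(j)} = \sum_{i=1}^{j} u_{n-1}^{(i)} + (f_n - q_n)$, so for any weights $(\gamma_1, \dots, \gamma_m)$ the combination $w_n := \sum_{j=1}^{m} \gamma_j u_n^{(j)}$ obeys a recursion whose repeated backward differencing --- using $u_n^{(0)} := f_n - q_n$ --- produces a triangular chain of linear combinations: of $u^{(1)}, \dots, u^{(m)}$, then of $u^{(1)}, \dots, u^{(m-1)}$, and so on down to $\gamma_m u^{(1)}$. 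Every link of the chain is bounded as soon as $(w_n)$ is, because $|f_n - q_n| \le \alpha + 1 < 2$ holds automatically. Hence, provided $\gamma_m \neq 0$, boundedness of the single sequence $(w_n)$ forces boundedness of each $(u_n^{(j)})$ (with bounds inflating geometrically in $j$). So I would seek an $F$ designed so that the quantizer input $F_n$ keeps some such $w$ under control --- for instance an affine function of $f_n, u_{n-1}^{(1)}, \dots, u_{n-1}^{(m)}$ arranged so that $w_n$ equals a fixed multiple of the one-step quantization error $F_n - Q(F_n)$.

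The obstacle --- which is also why the naive $F$ (plain summation of the accumulators, i.e.\ the classical unstable higher-order scheme) fails and a carefully tuned construction is needed --- is the \emph{saturation} of $Q$: it takes only finitely many values, all lying in $[-1,1]$, so $F_n - Q(F_n)$ is small only while $F_n$ itself is bounded, which is precisely what we are trying to establish. Breaking this circularity is the whole game, and I would do it as Daubechies and DeVore do: fix the weights $(\gamma_j)$ carefully, then run a Lyapunov / invariant-region argument in the state space $\mathbb{R}^{m}$ of the vector $(u_n^{(1)}, \dots, u_n^{(m)})$ --- construct a bounded set $R$ containing the origin (hence the initial state, by \eqref{eqn:sd}) such that, whenever $|f_n| \le \alpha$, the one-step map \eqref{eqn:sd} carries $R$ into itself; equivalently, show that a suitable weighted norm of the state fails to increase once it exceeds the relevant threshold. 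This is where the hypotheses enter: the thresholds defining $R$ grow with $m$, so for large $m$ one must take the alphabet $\mathcal{A}$ correspondingly fine, and it is here that \eqref{eq:quant} and $\|f\|_{\infty} \le \alpha < 1$ (the margin $1 - \alpha > 0$) are used. Once $R$ is shown forward-invariant, $\|u^{(j)}\|_{\infty} \le C(m)$ for all $j$; combined with the first paragraph this gives both assertions of the proposition, and for the detailed construction of the admissible family $F$ and the invariance of $R$ I would refer to \cite{DD03}.
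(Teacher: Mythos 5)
The paper itself does not prove this proposition at all --- it is stated as a quoted result, with attribution and a pointer to \cite{DD03} --- so the relevant question is whether your outline faithfully reflects what that citation supplies. Your first half is fine: the difference relation \eqref{eq:relation} is indeed pure telescoping from \eqref{eqn:sd} (up to the harmless index shift you note), valid for any choice of the $q_n$, and your reduction showing that boundedness of a single combination $w_n=\sum_j\gamma_j u_n^{(j)}$ with $\gamma_m\neq 0$ forces boundedness of every $u^{(j)}$ is correct as linear algebra. Deferring the actual construction of $F$ and the invariant region to \cite{DD03} is also consistent with the paper's own treatment.

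However, two pieces of the scaffolding you wrap around that deferral misrepresent what the cited result provides, and one of them sketches an approach that would fail if actually carried out. First, your proposed mechanism --- take $F$ affine in $f_n,u_{n-1}^{(1)},\dots,u_{n-1}^{(m)}$ with fixed weights $\gamma_j$ so that $w_n$ is driven by the one-step quantization error, then prove forward invariance of a bounded region --- is precisely the class of \emph{linear} quantization rules. As the remarks following the proposition in this paper emphasize, such rules are only known to yield bounded states for $m=1,2$; for $m\ge 3$ the only admissible $F$ proven stable are the recursively defined, genuinely nonlinear functions of \cite{DD03}, and stability for simpler (e.g.\ linear) rules remains open. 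So ``fix the weights carefully and run a Lyapunov/invariant-set argument'' is not the Daubechies--DeVore proof, and it is not known that it can be made to close; the hard step is exactly the design of the nonlinear $F$, not the choice of weights. Second, your claim that for large $m$ one must take the alphabet ${\cal A}$ correspondingly fine inverts the main point of \cite{DD03}: their achievement is that the coarsest, one-bit quantizer $Q(u)=\mathrm{sign}(u)$, which satisfies \eqref{eq:quant}, gives bounded state sequences for \emph{every} order $m$. What grows with $m$ is the size of the invariant set --- hence $\|v\|_\infty$ and the constant $C$ in Proposition \ref{prop:dd1} --- not the number of quantization levels; the margin $\alpha<1$ enters the construction of that invariant set rather than being traded against alphabet fineness.
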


A few remarks are in order.

\begin{enumerate}

\item The recursions \eqref{eqn:sd} are not the only means for generating coarse quantization schemes having  $m$th order accuracy \eqref{order};  therefore, we shall refer to them as the \emph{standard} $\Sigma \Delta$ recursions.  For a comprehensive overview on more general setup for coarse quantization, we refer the reader to \cite{felix_thesis}.

\item In the case $m=1$, one may take $F(f_n, v_{n-1}) = f_n + v_{n-1}$ as admissible function in \eqref{eqn:sd}, and the recursion reduces to
\begin{equation}
\label{first-order}
v_{n} = v_{n-1} + f_n -Q(v_{n-1} + f_n).
\end{equation}
In this case, one may verify the boundedness of $(v_n)_{n \in \mathbb{N}}$ by induction:  if $| f_n | \leq 1$ for all $n \geq 1$, and if $| v_0 | \leq 3/2$, then $|v_n | \leq 3/2$ for all $n \geq 0$. 

 \item For $m=2$, the admissible functions of the form $F_{\gamma}(f_n, u_{n-1}, v_{n-1}) = \gamma u_{n-1} +  v_{n-1}$ guarantee boundedness of the sequence $(v_n)_{n \in \mathbb{N}}$ and $\| v \|_{\infty} \leq C_{\alpha}$, with the constant $C_{\alpha}$ depending only on $| f_n | \leq \alpha < 1$, for a range $\gamma= \gamma(\alpha) \geq 1$; see \cite{OYthesis} for more details. 
 
 \item For higher orders $m \geq 3$, the only admissible functions $F = F_m$ that have been \emph{proven} to guarantee boundedness of the $(v_n)_{n \in \mathbb{N}}$ for recursions of the form \eqref{eqn:sd} were constructed in \cite{DD03}; they are defined recursively with respect to the order $m$, and become increasingly complicated with increasing order.   Nevertheless, stability has been shown for a different class of recursions generating $m$th order quantization schemes, and the criteria for stability there are more aligned with the schemes implemented in practice \cite{felix_thesis}.  Still, proving stability for a wider class of admissible functions for orders $m \geq 3$ remains a challenging open problem of interest to both mathematicians and engineers.

\end{enumerate}

\subsection{Quiet $\Sigma \Delta$ quantization: defeating infinite memory}
As previously mentioned, the input signals in the context of analog-to-digital conversion are well-modelled as bandlimited functions.   Nevertheless, audio signals in actuality have finite time support, completely vanishing over intervals of time such as between speech phrases or musical tracks, and ultimately, vanishing indefinitely.  As bandlimited functions are restrictions of analytic functions to the real line, such functions cannot vanish identically on an interval unless they are identically zero; nevertheless, bandlimited functions can become arbitrarily small over arbitrarily large intervals of time, and still provide a good model for audio signals.

For efficiency reasons, it is desirable that the quantization sequence $(q_n)_{n \in \mathbb{N}}$, and in turn the state sequence $(v_n)_{n \in \mathbb{N}}$, fall to zero in response to vanishing input $f_n = f(\frac{n}{\lambda}) = 0$.
For instance, as the quantization level $q_n = 0$ corresponds to zero-voltage level, quantization schemes as such can essentially `shut off' over the off-support of the input, rendering them \emph{low-power}.   Indeed, there are devices that drive speakers with very high efficiency which use a three-level quantization alphabet  $q_n \in \{-1, 0, 1\}$, dating back to a patent issued to Crystal Semiconductor, Inc. (now Cirrus logic), see \cite{patent1} for more details.  For efficiency reasons as explained in that patent, it is desirable to maximize the number of `$0$'s and minimize the number of $+1$'s and $-1$'s in the tri-level quantization output.  In particular, devices which implement $\Sigma \Delta$ quantization with tri-level quantization are in widespread use in modern electronic technology \cite{adams}, see the website of Analog Devices for more details.



 \begin{remark}
We shall refer to any  quantization scheme for which the quantization sequence $(q_n)_{n \in \mathbb{N}}$, and the state sequence $(v_n)_{n \in \mathbb{N}}$ fall to zero at the onset of vanishing input $f_n = 0$ as \emph{quiet}.   
 \end{remark}
 
 Unfortunately, the quantization output produced by the standard $\Sigma \Delta$ schemes, \eqref{eqn:sd} is \emph{not} quiet; in contrast, quantization sequences $q_n$ produced by such recursions generally fall into periodic cycles at the onset of vanishing input. 
The treatment of such zero-input periodicities, which can cause spurious idle tones in the reconstructed signal, has been an area of active research over the past twenty years, see the patent \cite{patent2}, and also the papers  \cite{ChaosDither}, \cite{SDChaos} for more details.  One of the methods used in practice to break such cycles is to apply \emph{dither}, or random noise, to the sample input; note that $m$th-order accuracy in the sense of \eqref{order} is no longer obtained subject to the application of dither.  
%





\paragraph{Modifying the standard $\Sigma \Delta$ recursions to be quiet.}
Let us consider the task of modifying the standard $\Sigma \Delta$ recursions \eqref{eqn:sd} to be quiet, or so that ${\bf x}_n := (u_n^{(1)}, u_n^{(2)}, ... , u_n^{(m)})$ and $q_n$ fall to zero in response to vanishing input, and doing so while still maintaining $m$th order accuracy of the resulting approximations. We shall restrict attention to quantization schemes using \emph{tri-level} quantizers of the form $Q(u) = Q_{tri}(u)$, where 
\begin{equation}
Q_{tri}(u) = \left\{\begin{array}{cl}
1 & u >  1/2, \\
0 & -1/2 \leq u \leq 1/2, \\
-1 & u < -1/2. \end{array}\right.
\label{tri-level}
\end{equation}
As discussed previously, tri-level quantizers are often implemented in quantization devices which are built primarily for high efficiency, which is the motivational setting for quiet quantization.   

\paragraph{Dynamical system interpretation.} When $f_n = 0$,  the standard $m$th order recursion \eqref{eqn:sd} reduces to an $m$-dimensional piecewise-affine map. Implemented with tri-level quantizer \eqref{tri-level}, and assuming that $-1/2 < F(0, {\bf 0}) < 1/2$, the state ${\bf x} = {\bf 0}$ is a fixed point of this so-called \emph{zero-input} map.  Therefore, quietness is achieved if the standard recursions \eqref{eqn:sd} may be modified so that ${\bf x}= {\bf 0}$ is an \emph{attractive} fixed point for the zero-input map, at least within a neighborhood containing the bounded sequence $({\bf x}_n)_{n \in \mathbb{N}}$, and if this can be done while still maintaining boundedness of the sequence $(v_n)_{n \in \mathbb{N}}$, and $m$th order reconstruction accuracy \eqref{order} of the scheme.  

\paragraph{Quietness for first-order $\Sigma \Delta$ quantization.}

Quietness for the first-order $\Sigma \Delta$ scheme $m=1$, implemented with tri-level quantizer, has been previously studied previously in \cite{OYthesis}.  There, the author shows that quietness is achieved by composing the first-order recursion \eqref{first-order} with a contraction $v_n \rightarrow \rho v_n$, leading to the map $v_{n} =  \rho v_{n-1} + f_n - Q(\rho v_{n-1} + f_n)$.   Indeed, it is not hard to verify that

\begin{enumerate}
\item  the uniform bound $| v_n | \leq 3/2$ holds subject to this modification, 
\item the fixed point $v = 0$ is a globally attracting fixed point of the zero-input map $v_{n} = \rho v_{n-1} - Q_{tri}(\rho v_{n-1})$, and 
\item the modified recursion $v_{n} =  \rho v_{n-1} + f_n - Q_{tri}(\rho v_{n-1} + f_n)$ still represents a first-order scheme if the damping is such that $\rho \geq \rho_{\lambda} := 1 - \frac{1}{\lambda}$, where $\lambda$ is the oversampling ratio.  We refer the reader to \cite{OYthesis} for a proof of these results.  

%


\end{enumerate}

\paragraph{Quietness for second-order $\Sigma \Delta$ quantization.}

\noindent  Unfortunately, first-order $\Sigma \Delta$ schemes are rarely used in practice, and we would like to obtain similar quiet modifications of the higher-order standard recursions \eqref{eqn:sd}. Second-order $\Sigma \Delta$ schemes are often preferred in practice for having improved reconstruction error over first-order schemes, while still maintaining stability and ease of implementation.  For the remainder of this paper, we shall concentrate only the case $m=2$; we leave the analysis of higher-order quiet $\Sigma \Delta$ schemes to future work.

As mentioned previously, the simplest admissible functions for the second-order recursion \eqref{eqn:sd} are of the form $F(u, v) = \gamma u + v$.  For this case and for a range of $\gamma \geq 1$ that depends on the parameter $\alpha$ in the bound $|f_n| \leq \alpha < 1$, the sequence $(v_n)_{n \in \mathbb{N}}$ generated by the second-order scheme \eqref{eqn:sd} is guaranteed to remain bounded.  This is shown in \cite{OYthesis} by constructing a family of convex sets $S_{\alpha} \subset \mathbb{R}^2$ which are invariant under the recursion.  Implemented with tri-level quantizer and linear rule $F(u,v) = \gamma u + v$, the second-order recursion \eqref{eqn:sd} has the form,
\begin{eqnarray}
\label{second-order}
\nonumber
(u_0, v_0) &=& (0,0), \nonumber \\
\nonumber \\
q_n &=&  \left\{
\begin{array}{ll}
-1, & F(u_{n-1}, v_{n-1}) \leq -1/2 \nonumber \\

0, &  \Big| F(u_{n-1}, v_{n-1}) \Big| < 1/2 \nonumber \\

1, &  F(u_{n-1}, v_{n-1}) \geq 1/2, \nonumber 
\end{array}\right. \nonumber \\ \nonumber \nonumber \\
(u_{n}, v_{n})  &=& T\big( (u_{n-1}, v_{n-1}), f_n \big) := (u_{n-1} + f_n - q_n, \hspace{1mm} u_{n-1} + v_{n-1} + f_n - q_n).
\end{eqnarray}
It is natural to wonder whether the damped modification $(u_n, v_n) = T(\rho(u_{n-1}, v_{n-1}), f_n)$ might induce quietness in this case as it did for the first-order recursion \eqref{first-order}.  Indeed, the behavior of such a `leaky' modification has been studied extensively in its own right, for the behavior of its resulting periodic cycles \cite{feely}, and for its approximation accuracy \cite{OYthesis}. With respect to the latter point, it has been shown that  the boundedness of the state sequence $(v_n)_{n \in \mathbb{N}}$ is not affected by this modification, as the invariant sets for the map $(u_{n+1}, v_{n+1}) = T((u_n, v_n), f_n)$ are convex and contain the origin.  Moreover, identifying $u_n - \rho u_{n-1} = v_n - 2\rho v_{n-1} + \rho^2 v_{n-2}$, one verifies that  second-order accuracy is maintained for the leaky scheme $(u_n, v_n) = T(\rho(u_{n-1}, v_{n-1}), f_n)$ if the damping factor $\rho$ is bounded below by $\rho_{\lambda} = 1 - \frac{1}{\lambda}$ at each step:
\begin{eqnarray}
\label{2ndorder:leak}
f(t) - \widetilde{f}(t) &=& \frac{1}{\lambda} \sum_{n \geq 1} (f_n - q_n) g \Big( t - \frac{n}{\lambda}\Big) \nonumber \\
&=&  \frac{1}{\lambda} \sum_{n \geq 1} (v_n - 2\rho v_{n-1} + \rho^2 v_{n-2}) g\Big(1 - \frac{n}{\lambda}\Big) \nonumber \\
&=& \frac{1}{\lambda} \sum_{n \geq 1}(v_n - 2v_{n-1} + v_{n-2}) g\Big(t - \frac{n}{\lambda}\Big) + 2(1 - \rho) \frac{1}{\lambda}\sum_{n \geq 1} (v_{n-1}-v_{n-2}) g \Big( t - \frac{n}{\lambda}  \Big)\nonumber \\
&& +(1 - \rho)^2 \frac{1}{\lambda}\sum_{n \geq 1} v_{n-2} \cdot g \Big( t - \frac{n}{\lambda} \Big).
\end{eqnarray}
By Proposition \ref{prop:dd1}, the first term in the ultimate expression is bounded in magnitude by $ C\lambda^{-2}$ for $t \geq t_0(\lambda)$ sufficiently large, and the second term is bounded in magnitude by $C\lambda^{-1}(1 - \rho) \leq C\lambda^{-2}$.  This is because the first term corresponds to the approximation error for a standard second-order $\Sigma \Delta$ recursion, and the second term corresponds to the approximation error incurred for a standard first-order $\Sigma \Delta$ recursion, multiplied by $2(1-\rho)$.   At the same time, the third term is proportional to $(1 - \rho)^2 C= \lambda^{-2} C$ for a constant $C$ depending on $\| g \|_{L^1(\mathbb{R})}$ and $\| v \|_{\infty}$ only.   In total, we arrive at the desired second-oder bound: $| f(t) - \widetilde{f}(t) | \leq C \lambda^{-2}$ for $t \geq t_0(\lambda)$.

\paragraph{The second-order leaky modification is not quiet.} Unfortunately, despite retaining second-order accuracy, leaky modifications to the standard second-order recursion of the form $(u_n, v_n) = T(\rho(u_{n-1}, v_{n-1}), f_n)$ still do not achieve quietness, because the origin is not an attractive fixed point for the piecewise affine zero-input map $(u_n, v_n) = T(\rho_n(u_{n-1}, v_{n-1}), 0)$ in a sufficiently large neighborhood. For example, if $u_0 = ( 1 + \rho)^{-1}$, $v_0 = 0$, and $f_1 = (1 + \rho)^{-1}$ at the onset of zero input $f_2 = f_n = 0, n \geq 2$, the system falls into period-$2$ oscillation.  Nor are such period-$2$ trajectories `pathological situations' in an otherwise well-behaved neighborhood of convergent trajectories; numerical results such as those in Figure \ref{fig:quietmap} suggest that the neighborhood of attractivity for the piecewise-affine system $(u_{n+1}, v_{n+1}) = T(\rho(u_n, v_n), 0)$  shrinks to a one-dimensional subset of $\mathbb{R}^2$ as $\rho \rightarrow 1$.  

\begin{figure}[!h]
\begin{center}
\includegraphics[width=3in]{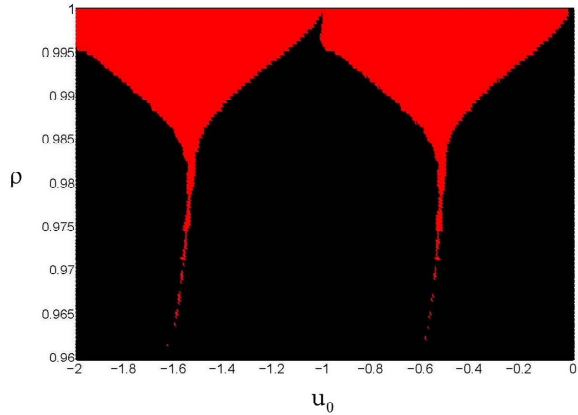}
\caption{ \small We indicate whether or not the trajectory of ${\bf x}_0 = (u_0, v_0)$ under the map $(u_{n+1}, v_{n+1})= T\big( \rho (u_n, v_n),0 \big)$, as defined in \eqref{second-order}, converges to zero, for $100$ equispaced values of $\rho \in [.96,1)$ and initial conditions $(u_0, 0)$ in the interval $u_0 \in [-2,0]$.  The figure suggests that the neighborhood of attraction of the map shrinks to a one-dimensional set as $\rho \rightarrow 1$, as the gray `tornado' regions  indicate initial values $u_0$ for which, after one million iterations, the discrete output $q_n \in \{-1,0,1\}$ has a nontrivial period of fewer than $100$ iterations that persists over an additional one million iterations.  In contrast, the analogous plot for the  asymmetric variant of this map, \eqref{asd}, produces no gray regions.}
\label{fig:quietmap}
\end{center}
\end{figure}

\section{The main result}
As it turns out, the situation changes completely if we apply damping, but not at \emph{every} iteration, only at iterations $n$ for which $u_n \geq 0$ (or the symmetric, when $u_n \leq 0$). As the main contribution of this paper, we introduce the following \emph{asymmetrically-damped} variant of the standard second-order $\Sigma \Delta$ recursion \eqref{second-order},

\begin{eqnarray}
\label{asd}
(u_{n+1}, v_{n+1}) &=& \widetilde{T}\big( (u_n, v_n), f_n \big) := 
\left\{ \begin{array}{ll}
T( \rho(u_n, v_n), f_n ), & u_n \geq 0, \\
T((u_n, v_n), f_n ), & u_n < 0,
\end{array}\right.
\end{eqnarray}
and we shall show that this modification is guaranteed to be quiet.   As far as the author is aware, the asymmetric scheme \eqref{asd} is the first such example of a coarse quantization scheme which obtains second-order approximation accuracy.  Following the argument \eqref{2ndorder:leak}, this asymmetric scheme is second-order if $\rho \geq 1 - \frac{1}{\lambda}$.  Moreover, as a consequence of the asymmetry of the damping, the $u_n$ and in turn the $v_n$, are \emph{forced} to zero at the onset of zero input, no matter how small one sets the difference $1 - \rho$, and independent of the initial conditions $(u_0, v_0)$.  

\paragraph{Numerical Illustration.}  In Figure \ref{fig:2}\subref{fig:subfig1},  we plot the approximation $\widetilde{f}_s(t)$ to an identically zero signal $f(t) \equiv 0$ as produced by the standard second-order $\Sigma \Delta$ recursion with tri-level quantizer, \eqref{second-order}.  The idle tones produced by the periodicity in the output $(q_n)_{n \in \mathbb{N}}$ are easily visible as spikes in the frequency domain of this reconstruction.  In Figure \ref{fig:2}\subref{fig:subfig3}, we plot a reconstruction $\widetilde{f}_q$ produced by the quiet scheme, \eqref{asd}; the spikes in the frequency domain have been clearly smeared out. 


\begin{figure}[!h]
{
\centering
\subfigure[We plot $\widetilde{f}_s(t)$, an approximation to $f(t) \equiv 0$ using the standard second-oder scheme \eqref{second-order}, with oversampling rate $\lambda = 100$ and parameter $\gamma = 2$, and starting from initial conditions $(u_0, v_0) = (.5, .3)$.  This reconstruction is represented in time and also in the frequency domain.  Spikes in the frequency domain, resulting from periodicities in the quantization output, can create idle tones in the reconstructed signal.]{
{\includegraphics[width=6cm]{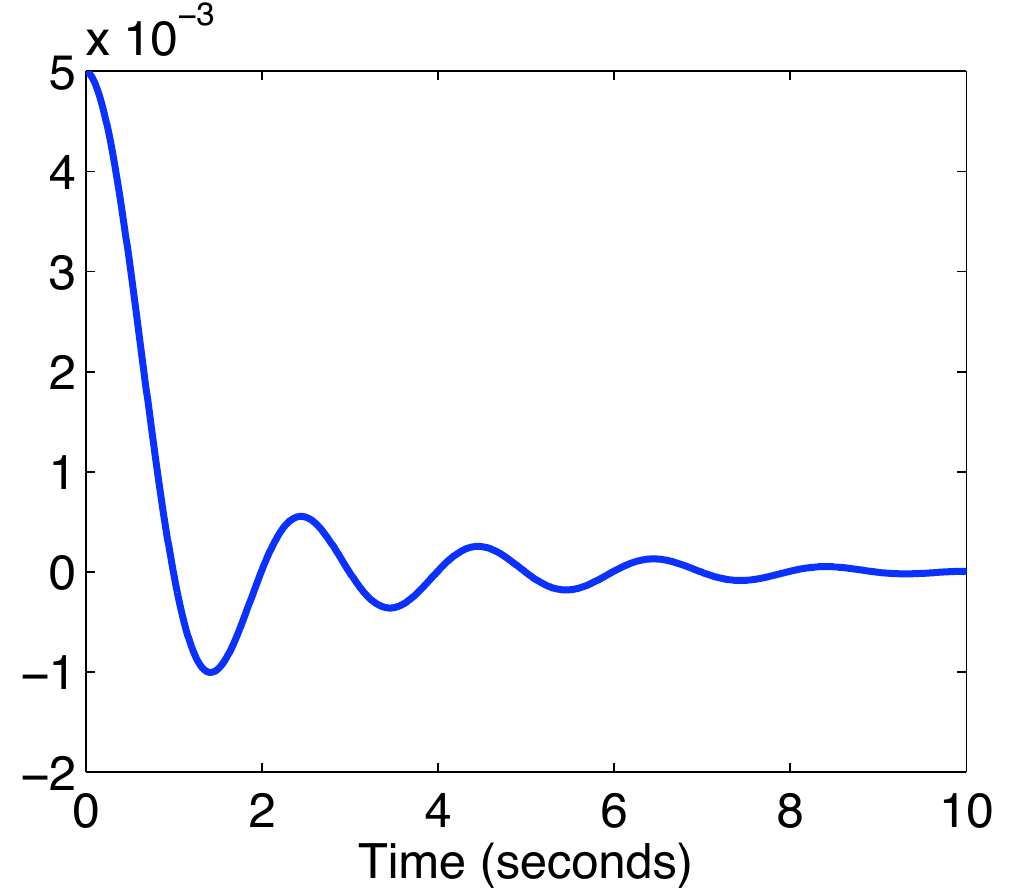}
\label{fig:subfig1}}
{\includegraphics[width=6cm]{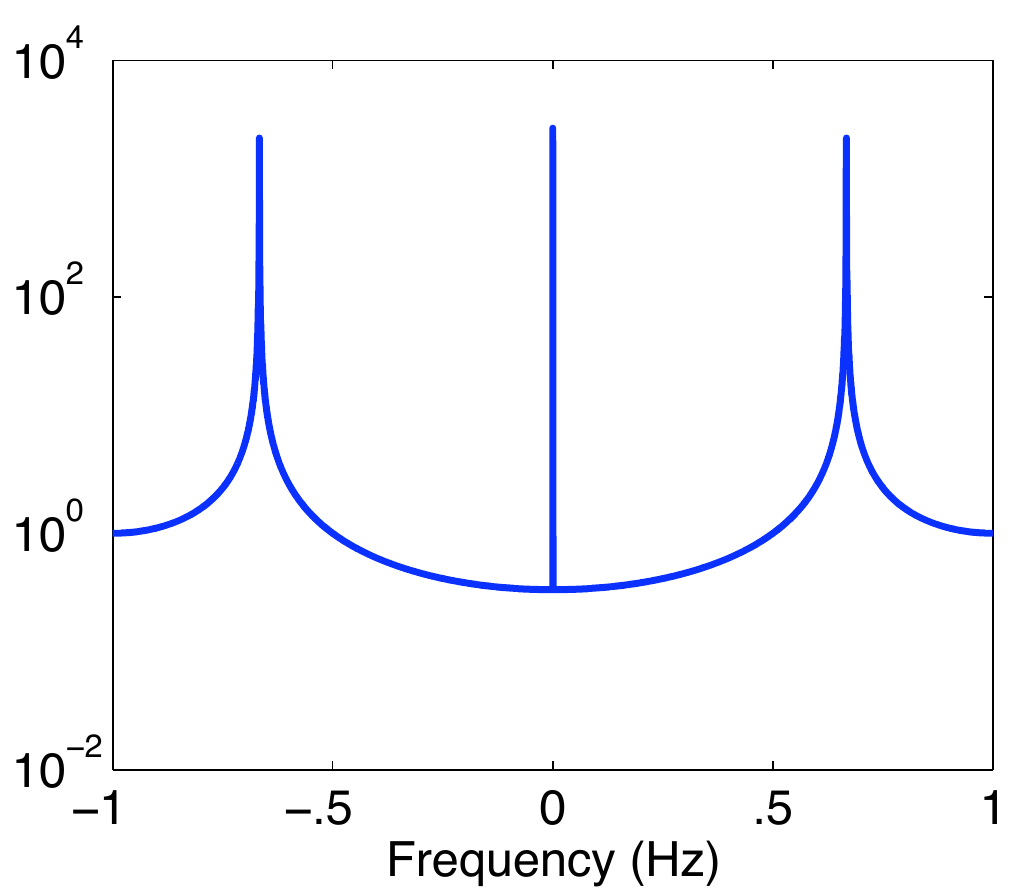}
\label{fig:subfig2}}
}
\subfigure[We plot $\widetilde{f}_q(t)$, an approximation to $f(t) \equiv 0$ using the introduced quiet second-oder scheme \eqref{asd} with oversampling rate $\lambda = 100$,  parameter $\gamma =2$, and damping factor $\rho = .99$, and starting from the same initial condition $(u_0, v_0) = (.5, .3)$, represented in timeand also in frequency.  The spikes in the frequency domain have been smeared out.]{
{\includegraphics[width=6cm]{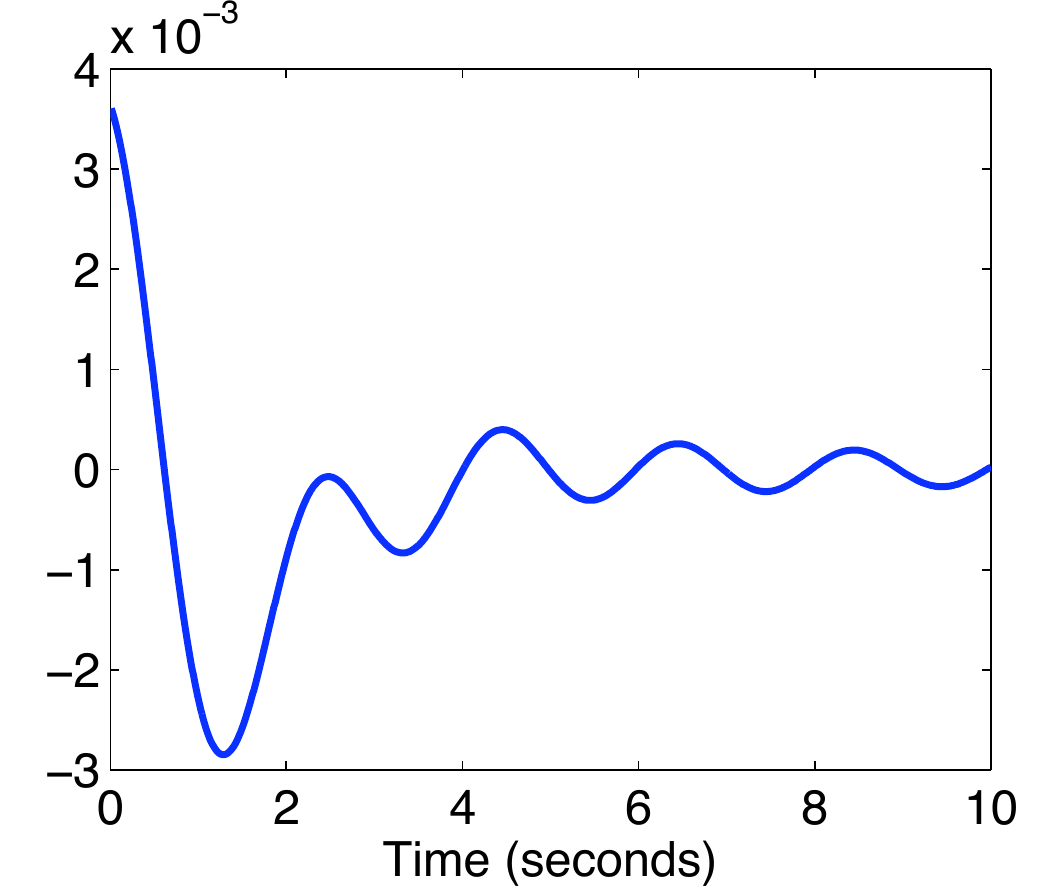}
\label{fig:subfig3}}
{\includegraphics[width=6cm]{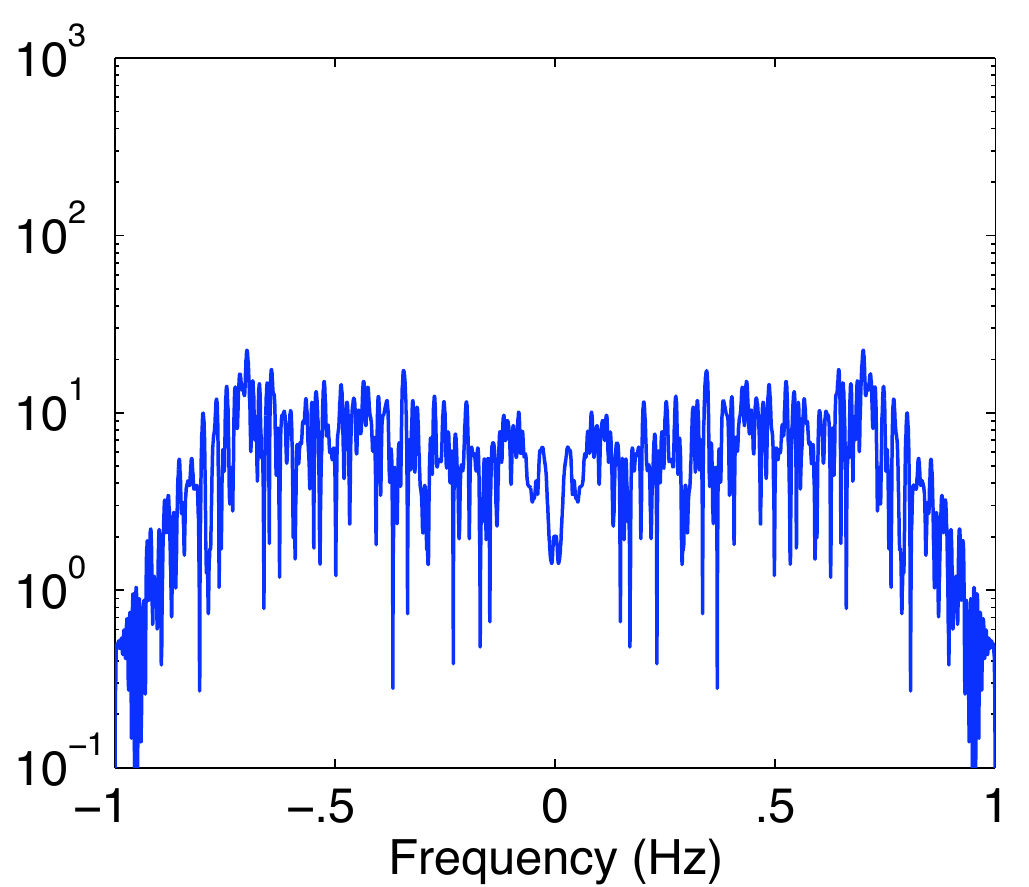}
\label{fig:subfig4}}
}
\label{fig:2}
\caption{Comparison of the standard and asymmetrically-damped second-order $\Sigma \Delta$ quantization schemes.}
}

\end{figure}

\vspace{3mm} 
\noindent The remainder of the paper shall be devoted to proving that the origin is a globally attracting fixed-point of the zero-input map $(u_{n+1}, v_{n+1}) = \widetilde{T}\big( (u_n, v_n), 0 \big)$, as a consequence of the following theorem.
 \begin{theorem}
Consider the linear operator $A: \mathbb{R}^2 \rightarrow \mathbb{R}^2$ given by $A(u, v) = (u, u + v)$, the vectors ${\bf 1} = (1,1), {\bf c} = (\gamma, 1)$, and ${\bf d} = (1,0)$, and the piecewise affine map
\begin{eqnarray}
\label{zeromap:T}
T &:& \mathbb{R}^2 \rightarrow \mathbb{R}^2, \nonumber \\ \nonumber \\
{\bf x}_{n+1} = T {\bf x}_n &:=& \left\{
\begin{array}{ll}
A {\bf x}_n + {\bf 1}, & \textrm{if } \scalprod{{\bf c}}{{\bf x}_n} \leq -1/2 \\

A {\bf x}_n, &  \textrm{if } | \scalprod{{\bf c}}{{\bf x}_n} | < 1/2 \\

A {\bf x}_n - {\bf 1}, & \textrm{if } \scalprod{{\bf c}}{{\bf x}_n} \geq 1/2.
\end{array}\right.
\end{eqnarray}
For any fixed amplification factor  $\gamma \geq 1$ and damping factor $0 \leq \rho < 1$, the origin is a globally attracting fixed point for the asymmetrically-damped piecewise affine map, 
\begin{eqnarray}
\label{asmM}
M &:& \mathbb{R}^2 \rightarrow \mathbb{R}^2 \nonumber \\ \nonumber \\
{\bf x}_{n+1} = M \hspace{.5mm} {\bf x}_n &:=& 
\left\{ \begin{array}{ll}
T(\rho {\bf x}_n), & \scalprod{{\bf d}}{{\bf x}_n} \geq 0, \\
T {\bf x}_n, & \scalprod{{\bf d}}{{\bf x}_n} < 0.
\end{array}\right.
\nonumber
\end{eqnarray} 
\label{mainthm} 
\end{theorem}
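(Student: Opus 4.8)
The plan is to follow the two-stage scheme announced in the abstract. \emph{Stage one} isolates a bounded, convex, forward-invariant and globally absorbing region $S$ for $M$; \emph{stage two} shows that every orbit entering $S$ converges to the origin, and it is here that the asymmetry of $M$ is used in an essential way.

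For stage one I would build $S$ as a sublevel set of a convex, piecewise-linear Lyapunov function $V$ satisfying $V\circ M\le V$ on all of $\R^2$ and $V\circ M<V$ outside a compact set. The active branch of $M$ is selected by the linear functional $w:=\scalprod{\mathbf c}{\mathbf x}=\gamma u+v$, and the tri-level quantizer \eqref{tri-level} prevents the orbit from straying once $w$ is moderate; for the undamped map $T$ alone (the standard zero-input second-order scheme) convex invariant polygons are classical (cf. \cite{OYthesis}), and I would modify that construction to absorb the extra factor $\rho$ present on $\{\scalprod{\mathbf d}{\mathbf x}\ge 0\}$. The decrease inequality for $V$ has to be checked separately on the two half-planes $\{\scalprod{\mathbf d}{\mathbf x}\ge 0\}$ and $\{\scalprod{\mathbf d}{\mathbf x}<0\}$: on the latter it reduces to the usual undamped estimate, and on the former the damping only helps. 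The real work here is just getting the bookkeeping of the three branches right.

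For stage two I would first show that no orbit stays forever in the undamped half-plane $\{\scalprod{\mathbf d}{\mathbf x}<0\}$. While the middle branch is active there, $u_n$ is frozen and $w_n$ strictly decreases, so the branch $q_n=-1$ must eventually fire and raise $u_n$ by one; combining this with the boundedness of $(v_n)$ on $S$ (so that $\sum(u_n-q_n)$ and $\sum q_n$ are both bounded, which would force $u_n\to 0$ if the orbit remained in $\{u<0\}$) produces a contradiction with $u_n<0$. Hence every orbit visits the damped half-plane infinitely often, or eventually remains in it. On that half-plane the linear part is $\rho A$, with $\det(\rho A)=\rho^2<1$ and spectral radius $\rho<1$, so each visit contracts toward $\mathbf 0$; what remains is to preclude this contraction from being perpetually undone by the neutral shear $A$ on the undamped side — exactly the failure mode of the symmetric leaky scheme, whose attracting region collapses to a one-dimensional set. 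Here the sign structure is decisive: because damping is applied precisely on $\{u\ge 0\}$, the neutral direction is only ever traversed in bounded bursts that must be followed by a strictly contracting step before the orbit can revisit the same configuration. I would make this quantitative either by constructing, on $S$, a second Lyapunov function $W$ that is non-increasing under $M$, strictly decreasing at every point of $S\cap\{u\ge 0\}\setminus\{\mathbf 0\}$, and vanishing on $S$ only at $\mathbf 0$ (so that $W(\mathbf x_n)\to 0$); or — since $S$ is bounded and therefore restricts which symbolic itineraries are realizable — by a direct classification showing that the only bounded $M$-invariant subset of $S$ is $\{\mathbf 0\}$, every candidate nonzero point fixed by a finite itinerary containing at least one damped step being of the form $(I-\rho^k A^p)^{-1}\mathbf b$ with $k\ge 1$ and being inconsistent with the quantizer thresholds that its own itinerary demands.

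The main obstacle is this last step. Stage one and the finite-time escape from $\{u<0\}$ are routine in spirit, and the contractivity of $\rho A$ is immediate; the real difficulty is converting the geometric picture ``the neutral shear cannot sustain a nonzero limit set once damping is switched on for $u\ge 0$'' into a complete argument — either the explicit construction of the second Lyapunov function $W$ that genuinely sees the asymmetry, or the exhaustive (but, thanks to $S$, finite) analysis of the admissible interactions between the three quantizer branches and the two damping regimes, carried out without ever losing the strict decrease.
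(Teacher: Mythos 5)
There is a genuine gap, and it sits exactly where the theorem's content lies. Your stage two defers the asymmetry-exploiting step — why orbits inside the absorbing region actually converge — and neither of the two routes you offer is carried out or clearly viable. The claim that ``each visit'' to $\{u\ge 0\}$ contracts toward $\mathbf 0$ because $\rho A$ has spectral radius $\rho<1$ is false as a per-step statement: $\rho A$ is a shear whose norm exceeds $1$ for $\rho$ near $1$, so a single damped step need not shrink anything, and visits to the damped half-plane may last one iteration. Your alternative, classifying fixed points of finite itineraries of the form $(I-\rho^kA^p)^{-1}\mathbf b$, only excludes \emph{periodic} orbits; it does not exclude aperiodic bounded invariant subsets of $S$, and upgrading it to a LaSalle-type argument would require a function non-increasing along all orbits in $S$ — which you have not produced, and which the paper warns against (its own Lyapunov function decreases only \emph{outside} $S$, and the systematic piecewise-quadratic candidates provably fail for $M$ and even for $T$). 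The paper's actual mechanism is combinatorial: the invariant set is $S=S^+\cup S^-$, a union of two parallelograms (not a convex sublevel set), on which nonzero quantizer outputs strictly alternate in sign; consequently, between consecutive returns to $\{u\ge 0\}$ the $u$-coordinate is multiplied \emph{exactly} by $\rho$, so $u\to 0$ along those returns, and a final case analysis (a period-two inequality $v_{n+2}\le v_n+2\epsilon-1$, which would force $v$ to diverge) rules out the orbit hovering with $u$ near $-1$; the orbit is then captured in $S^+$ with $q\equiv 0$ and ${\bf x}_n=\rho^nA^n{\bf x}_0\to 0$. Nothing in your sketch substitutes for this argument.

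Stage one is also not as routine as you suggest. You posit a convex piecewise-linear $V$ with $V\circ M\le V$ on all of $\R^2$, with the undamped half-plane ``reducing to the usual undamped estimate''; but on $\{u<0\}$ the map is the neutral shear plus offsets and no such global monotone estimate is available off the shelf. The paper instead takes $V(u,v)=u^2+|2v-u|$, proves that the set where $V$ can \emph{increase} under $T$ is contained in $S$ (so no monotonicity is claimed inside $S$), gets strict decrease on the damped half-plane from convexity ($V(\rho{\bf x})\le\rho V({\bf x})$), handles $\Lambda_0\cap\{u<0\}$ by direct computation, and still needs a separate argument that no orbit can remain forever in $\Lambda_{1}\cap\{u<0\}$ or $\Lambda_{-1}\cap\{u<0\}$. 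So both of your stages, as written, are missing the ideas that make the proof work; the overall two-phase architecture matches the paper, but the load-bearing steps are absent.
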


Let us pause to discuss an additional application of the main theorem, and of the quiet quantization scheme \eqref{asd}.

\paragraph{Finite Impulse Response filter coefficient quantization.} The reconstruction formulae \eqref{reconstruct} and \eqref{eq:fapprox} for sampling and quantization of bandlimited signals  are particular examples of \emph{discrete-time linear filters}, or implementations of the convolution between an infinite-length input sequence $(x_n)_{n \in \mathbb{N}}$ and a set of coefficients $(c_n)_{n \in \mathbb{N}}$, generating output of the form $y_n = \sum_{j \in \mathbb{N}} c_j x_{n-j}$.  The filter is called a \emph{finite impulse response} (FIR) filter if the set of coefficients is finite-length. The Fourier transform of the output sequence $(y_n)_{n=1}^N$ (in the sense of the discrete-time Fourier transform) is the product of the Fourier transform of the input sequence with the Fourier tranform of the set of coefficients, $\widehat{y}(\omega) = \widehat{c}(\omega) \cdot \widehat{x}(\omega)$.

In some cases, one would like to reduce the hardware complexity of FIR filters by quantizing the \emph{coefficients} $(c_j)_{j=1}^N$ using only a few bits for each coefficient.   If one simply rounds each coefficient value to its nearest quantized level (binary quantization), then the Fourier transform of the set of coefficients may be changed dramatically and therefore the frequency response of the filter is no longer useful. However, in some cases it is only necessary to maintain the filter's frequency response in  lower frequencies, and the rest of the frequencies are not important.

In this case, the original real-valued coefficients of the filter $c_j$ are often modified by sending them through a recursion such as the standard $\Sigma \Delta$ recursion \eqref{eqn:sd},  to obtain a new set of quantized coefficients $q_j$, see \cite{noiseshape}, \cite{noiseshape2}.  The frequency response of the filter is maintained on lower frequencies subject to such quantization; to see this, let us analyze the Fourier transform of the quantization error:
\begin{eqnarray}
\widehat{e}(\omega) &:=& (\widehat{c}(\omega) - \widehat{q}(\omega)) \cdot \widehat{x}(\omega) \nonumber \\
&=&  \widehat{(\Delta^{(m)} v)}(\omega) \cdot \widehat{x}(\omega) \nonumber \\
 &=& (2 \pi i \omega)^m \cdot \widehat{v}(\omega) \cdot \widehat{x}(\omega).
\end{eqnarray}
The problem with this technique is that, while the number of input coefficients  is finite, the quantization output cannot be stopped without causing a large error in the Fourier transform of the quantized coefficients, if the standard noise shaping recursions \eqref{eqn:sd} are applied, \cite{adams}.  However, if one employs instead the quiet noise-shaper, \eqref{asd}, then as a consequence of Theorem \eqref{mainthm}, the tri-level quantization output  is guaranteed to go to zero after the sequence of input coefficients has been exhausted. This means that the set of quantized coefficients is slightly longer than the original coefficient set, as one must wait for the noise-shaper to fall into the `all-zeroes' state, but nevertheless is finite.

In words, the introduced quiet $\Sigma \Delta$ noise-shaper \eqref{asd}, which is guaranteed to be quiet as a result of Theorem \ref{mainthm}, allows a way to quantize coefficients to $3$ levels, such that a finite set of input coefficients can produce a finite set of output coefficients, such that the Fourier transform of the two sets of coefficients match at low frequencies, and diverge at higher frequencies.

\section{Proof of Theorem \ref{mainthm}}
The remainder of the paper is devoted to the proof of Theorem \ref{mainthm}, and is essentially disjoint from the material presented in previous sections. Let us recall that \emph{piecewise affine maps} are discrete dynamical systems of the form
\begin{eqnarray}
\label{PWA}
T &:& \mathbb{R}^m \rightarrow \mathbb{R}^m, \nonumber \nonumber \\
{\bf x}_{n+1} &=& T {\bf x}_n = A_j {\bf x}_n + {\bf b}_j, \hspace{10mm} \textrm{if } {\bf x}_n \in \Omega_j,
\end{eqnarray}
where the sets $\{ \Omega_j \}_{j=1}^{L}$ form a finite partition of the domain $\mathbb{R}^m$, and $A_j \in \mathbb{R}^{m \times m}$. 
It has been recently shown \cite{attractivity} that the \emph{attractivity problem},  or deciding whether or not the origin is a global attracting fixed point of a discrete map, is in general undecidable for piecewise affine maps in dimension $m \geq 2$.   Consequently, there is no universal procedure which can decide, given a generic piecewise affine map in dimension $m \geq 2$,  whether all trajectories converge to zero.  
 In order to address the attractivity problem for a particular piecewise affine map, one must then either verify that certain sufficient conditions hold, or develop a convergence proof for a restricted subclass containing the map of interest.  Sufficient conditions for attractivity generally involve verifying the existence of a \emph{Lyapunov} function $V: \mathbb{R}^m \rightarrow \mathbb{R}^+$ having negative forward difference along trajectories of the map, $\Delta V({\bf x}_{n+1}, {\bf x}_n) = V({\bf x}_{n+1}) - V({\bf x}_n) < 0$.  For example, a recent result in this direction, from \cite{FT}, states that ${\bf x}={\bf 0}$ is a globally attracting fixed point of the piecewise affine map ${\bf x}_{n+1} = T({\bf x}_n)$ if there exists a function $V: \mathbb{R}^m \rightarrow \mathbb{R}^+$ of the form 
\begin{equation}
\label{L}
V({\bf x}) = {\bf x}^T P_j {\bf x}, \hspace{5mm} {\bf x} \in \Omega_j, \hspace{5mm} P_j = P_j^T > 0,
\end{equation}
which has negative forward difference along trajectories $\Delta V({\bf x}_{n+1}, {\bf x}_n) < 0$.
The constraints defining Lyapunov functions of this form may be recast as a set of \emph{linear matrix inequalities},
\begin{eqnarray}
A_j^T P_i A_j - P_j &<& 0,  \nonumber \\
P_i = P_i^T &>& 0, \hspace{5mm} \forall i \in L,
\end{eqnarray}
which must be checked over over all pairs $(i,j)$ for which it is possible that ${\bf x}_k \in \Omega_i$ and ${\bf x}_{k+1} \in \Omega_{i+1}$.  Linear matrix inequalities can be either solved or shown to be nonexistent using standard linear programming solvers and as such, the construction or nonexistence of a Lyapunov function of the form \eqref{L} for a particular piecewise affine map may be determined in polynomial time.  
\\ 
\\
\noindent Unfortunately, Lyapunov functions such as the piecewise positive-definite functions in \eqref{L} which may be tested systematically are very restrictive, and are not applicable to many piecewise affine maps of interest which nevertheless have a global fixed point.  For example, as the reader may check, a Lyapunov function if the form \eqref{L} does not exist for the asymmetric piecewise affine map ${\bf x}_{n+1} = M({\bf x}_n, 0)$ of Theorem \ref{mainthm} which is of interest to us.   Nevertheless, we are still able to prove that the origin is an attractive fixed point of the asymmetric piecewise affine map; the proof is divided into two parts:

\begin{enumerate}
\item In Section $3.1$, we construct a invariant set $S$ for the asymmetrically-damped map $M$, and show that all orbits initialized in this set converge to the origin.  
\item In Section $3.2$, we show, using a Lyapunov function argument, that $S$ is also a global trapping set for $M$.
\end{enumerate} 

\noindent It is not clear that the proof need necessarily be split in two parts as such.  The Lyapunov function constructed in Section $3.1$ only decreases along orbits lying outside the trapping set $S$, and so can not be used to prove convergence directly.  However, there could exist a different Lyapunov function that does decrease along all orbits of the map $M$.  Yet, numerical results such as those in Figure \ref{fig:magnifty_orbit} show a marked change in the behavior of orbits upon entering the invariant region $S$, suggesting an inherent two-phasedness to the dynamics of the system.

\noindent We will use the following notation to distinguish the regions over which $q_n = 1$, $0$, and $-1$, respectively, when $u \geq 0$:

\begin{eqnarray}
\Lambda_1 &=& \{(u,v) \in \mathbb{R}^2: \rho(\gamma u + v)  > 1/2\}, \hspace{10mm} \Lambda_0 = \{(u,v) \in \mathbb{R}^2: \rho |\gamma u + v | \leq  1/2\}, \nonumber \\
\Lambda_{-1} &=& \{(u,v) \in \mathbb{R}^2: \rho (\gamma u + v) < -1/2\}. \nonumber
\end{eqnarray}

\subsection{An invariant set and convergence to the origin} 
In this subsection, we construct a invariant set for the asymmetrically-damped map $M$ of Theorem \ref{mainthm}. 
\begin{proposition}
Consider the regions
\begin{eqnarray}
S^+ &=& \{ (u,v) \in \mathbb{R}^2: -1/2 \leq \gamma u + v \leq 1/2 + \gamma, 0 \leq u < 1 \}, \textrm{and}\nonumber \\
S^- &=& \{ (u,v) \in \mathbb{R}^2: -(1/2 + \gamma) \leq \gamma u + v \leq 1/2, -1 \leq u < 0 \}, \nonumber
\end{eqnarray}
The union of these two regions, $S = S^+ \cup S^-$, as depicted in Figure \ref{fig:stable2}, is a invariant set for the map ${\bf x}_{n+1} = M {\bf x}_n$.  Furthermore, for ${\bf x}_n \in S$:
\begin{itemize}
\item $q_n = 1$ if and only if ${\bf x}_n \in S^+$ and ${\bf x}_{n+1} \in S^-$, 
\item $q_n = -1$ if and only if ${\bf x}_n \in S^-$ and ${\bf x}_{n+1} \in S^+$,  and 
\item If $| q_{k_1} | = | q_{k_2} | = 1$ and $q_k = 0$ for all $k_1 < k < k_2$, then $q_{k_1}$ and $q_{k_2}$ must have alternate signs, i.e. $q_{k_1} q_{k_2} = -1$.
\end{itemize}
\label{invariantregion}
\end{proposition}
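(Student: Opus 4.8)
\noindent The plan is to prove all four assertions at once by a finite case analysis. Since $\scalprod{{\bf d}}{{\bf x}} = u \geq 0$ on $S^+$ and $u < 0$ on $S^-$, the pieces $S^+$ and $S^-$ are disjoint, and on each of them the map $M$ takes a single affine form once the quantizer output $q_n \in \{-1,0,1\}$ is known. So I would split on whether ${\bf x}_n = (u_n, v_n)$ lies in $S^+$ or in $S^-$, and then on the value of $q_n$; in each of the resulting cases ${\bf x}_{n+1}$ is given by an explicit affine formula, and the proposition reduces to four inclusions together with the sign bookkeeping they force. Throughout I use the tie-breaking convention of the regions $\Lambda_1,\Lambda_0,\Lambda_{-1}$, namely that $q_n = 0$ whenever the relevant quantity has modulus $\leq 1/2$.

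\noindent \textbf{The case ${\bf x}_n \in S^+$.} Here $u_n \geq 0$, so $M$ acts as $T(\rho {\bf x}_n)$ and the decision quantity is $\rho(\gamma u_n + v_n)$; since $-1/2 \leq \gamma u_n + v_n \leq 1/2 + \gamma$ and $0 \leq \rho < 1$, one has $\rho(\gamma u_n + v_n) > -1/2$, so $q_n \in \{0,1\}$ and $q_n = -1$ is impossible. If $q_n = 0$ then ${\bf x}_{n+1} = A(\rho {\bf x}_n)$, so $u_{n+1} = \rho u_n \in [0,1)$, and $\gamma u_{n+1} + v_{n+1} = \rho(\gamma u_n + v_n) + \rho u_n$ lies in $[-1/2,\ 1/2 + \rho) \subseteq [-1/2,\ 1/2 + \gamma)$ because $\rho < 1 \leq \gamma$; hence ${\bf x}_{n+1} \in S^+$. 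If $q_n = 1$ then ${\bf x}_{n+1} = A(\rho {\bf x}_n) - {\bf 1}$, so $u_{n+1} = \rho u_n - 1 \in [-1,0)$, and since in this case $1/2 \leq \rho(\gamma u_n + v_n) \leq \rho(1/2 + \gamma)$, the quantity $\gamma u_{n+1} + v_{n+1} = \rho(\gamma u_n + v_n) + \rho u_n - \gamma - 1$ lies in $[-1/2-\gamma,\ 1/2)$; hence ${\bf x}_{n+1} \in S^-$.

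\noindent \textbf{The case ${\bf x}_n \in S^-$.} Here $u_n < 0$, so there is no damping: $M$ acts as $T$ with decision quantity $\gamma u_n + v_n \leq 1/2$, whence $q_n \in \{0,-1\}$ and $q_n = 1$ is impossible. If $q_n = 0$ then ${\bf x}_{n+1} = A{\bf x}_n$, so $u_{n+1} = u_n \in [-1,0)$ and $\gamma u_{n+1} + v_{n+1} = (\gamma u_n + v_n) + u_n \in [-3/2,\ 1/2)$; the lower bound $-3/2$ is at least $-(1/2+\gamma)$ precisely because $\gamma \geq 1$, so ${\bf x}_{n+1} \in S^-$. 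If $q_n = -1$ then ${\bf x}_{n+1} = A{\bf x}_n + {\bf 1}$, so $u_{n+1} = u_n + 1 \in [0,1)$ and, using $-(1/2+\gamma) \leq \gamma u_n + v_n \leq -1/2$, the quantity $\gamma u_{n+1} + v_{n+1} = (\gamma u_n + v_n) + u_n + \gamma + 1 \in [-1/2,\ 1/2+\gamma)$, so ${\bf x}_{n+1} \in S^+$.

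\noindent Together these four cases give $M(S) \subseteq S$, proving invariance, and they show that from $S^+$ the unique transition escaping $S^+$ is the one with $q_n = 1$ (landing in $S^-$), while from $S^-$ the unique escaping transition is $q_n = -1$ (landing in $S^+$). Combining this dichotomy with the disjointness of $S^+$ and $S^-$ yields the two ``if and only if'' statements at once: if ${\bf x}_n \in S$ and $q_n = 1$, then ${\bf x}_n \notin S^-$ (which only produces $q_n \in \{0,-1\}$), so ${\bf x}_n \in S^+$ and hence ${\bf x}_{n+1} \in S^-$, while conversely ${\bf x}_n \in S^+$ with ${\bf x}_{n+1} \in S^-$ forces $q_n \neq 0$, hence $q_n = 1$; the case $q_n = -1$ is symmetric. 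For the last bullet, assume $q_{k_1} = 1$ (the case $q_{k_1} = -1$ being symmetric); then ${\bf x}_{k_1+1} \in S^-$, and since $q_k = 0$ for $k_1 < k < k_2$, the $S^- \to S^-$ transition and induction give ${\bf x}_k \in S^-$ for all $k_1 < k \leq k_2$, so $q_{k_2} \in \{0,-1\}$; as $|q_{k_2}| = 1$, necessarily $q_{k_2} = -1$, i.e. $q_{k_1} q_{k_2} = -1$. The only genuinely delicate points are the appeal to $\gamma \geq 1$ in the $S^- \to S^-$ step (the constant $-3/2$ must not undershoot $-(1/2+\gamma)$) and treating the half-open/closed boundaries consistently with the quantizer's tie-breaking at modulus $1/2$, so that a step from the closed upper edge $\gamma u + v = 1/2$ of $S^-$ (where $q_n = 0$) stays inside $S$; the remainder is routine interval arithmetic and presents no real obstacle.
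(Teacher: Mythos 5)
Your proof is correct and follows essentially the same route as the paper: a direct case analysis on ${\bf x}_n \in S^+$ versus $S^-$ and on the value of $q_n$, verifying each of the four possible transitions by interval arithmetic and reading the sign-pattern bullets off the resulting transition structure. The only differences are organizational — the paper first reduces the damped map $M$ to the undamped map $T$ via star-shapedness of $S$ about the origin and then invokes the symmetry of $T$ and $S$ to treat only ${\bf x} \in S^+$, proving the alternating-sign bullet by a short separate contradiction — and your explicit adoption of the tie-break $q_n = 0$ at $|\gamma u + v| = 1/2$ (the convention of the paper's $\Lambda_0$, not of the map $T$ as written in Theorem \ref{mainthm}) is in fact needed for the closed upper edge of $S^-$ to remain in $S$, a boundary point the paper's proof glosses over.
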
 
%

\begin{proof}
We begin by showing that $S$ is a invariant set for the asymmetrically damped map ${\bf x}' = M{\bf x}$.  Being $S$ the union of two convex sets, each containing the origin, ${\bf x} \in S$ implies $\lambda {\bf x} \in S$ for $\lambda \in [0,1]$.  As such, it is sufficient to show that  $S$ is invariant for the undamped map ${\bf x}' = T{\bf x}$ of Theorem \ref{mainthm}.  By symmetry of the set $S$ and the map $T$, we can assume without loss that ${\bf x} \in S^+$.  We consider the two cases ${\bf x} \in \Lambda_0$ and ${\bf x} \in \Lambda_1$ separately.  
\begin{enumerate}

\item {\bf $(u,v) \in \Lambda_1$: } In this region, $(u', v') := T(u,v) = ( u - 1,  v + u -1)$, and so in particular $-1 \leq u' < 0$.  Then ${\bf x}' \in S^-$ if 
$$-(1/2 + \gamma) \leq \gamma u' + v' \leq 1/2,$$ 
which is easily verified from the inequalities
$$1/2 \leq \gamma u + v \leq 1/2 + \gamma, \hspace{5mm} \textrm{and} \hspace{3mm} 0 \leq u < 1. $$  

\item {\bf $(u,v) \in \Lambda_0$:  } Now, $(u', v') = (u,  v + u)$, and so $0 \leq u' < 1$.  Also, we have that
$$ -1/2 \leq \gamma u' + v' \leq 1/2 + \gamma, $$
using the inequalities $ -1/2 \leq \gamma u + v \leq 1/2$, $0 < u' < 1$, and $\gamma \geq 1$.
\end{enumerate}
We proceed to the second part of the proposition.  Suppose that $({\bf x}_n)_{n=0}^{\infty}$ is a trajectory contained entirely in $S$, and that $q_1 = q_K = 1$, but $q_n = 0$ for $1 < n < K$.  Because $q_1 = 1$, ${\bf x}_0 \in S^+$ and $u_0 \in (0,1)$.  For the same reason, $u_{K-1} \in (0,1)$.  But $u_1 = \rho u_0 - 1 < 0$, and so $u_{K-1}$, a power of $\rho$ multiplied by $u_1$, must also be negative, leading to a contradiction.  A similar argument rules out the possibility that $q_1 = q_K = -1$ but $q_n = 0$ for $1 < n < K$.
\end{proof}
%


Upon each return to the set $S^+$, ${\bf x}_n$ is `tilted' by the damping ${\bf x} \rightarrow \rho {\bf x}$, creating an imbalance that forces the iterates to zero.
 
 \begin{lemma} \label{utozero}
Suppose that $(u_0, v_0) \in S$, so that $(u_n, v_n) = M (u_0, v_0) \in S$ for all $n \geq 0$ by positive invariance of $S$. The subsequence $(u_{n})_{n \in {\cal I}^+}$ consisting of indices ${\cal I}^+$ for which $u_n \geq 0$ necessarily converges to zero as $n \rightarrow \infty$.    
 \end{lemma}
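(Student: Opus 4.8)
The plan is to read off the orbit structure inside the invariant set $S$ already supplied by Proposition \ref{invariantregion}, and to track the single coordinate $u$. First I would record the local form of $M$ on each piece. When $\mathbf{x}_n\in S^+$ (equivalently $u_n\ge 0$), Proposition \ref{invariantregion} forces $q_n\in\{0,1\}$, and from the definitions of $T$, $M$ and $\Lambda_0,\Lambda_1$ one gets $u_{n+1}=\rho u_n$ if $q_n=0$ and $u_{n+1}=\rho u_n-1$ if $q_n=1$. When $\mathbf{x}_n\in S^-$ (equivalently $u_n<0$), $q_n\in\{0,-1\}$, and $u_{n+1}=u_n$ if $q_n=0$ while $u_{n+1}=u_n+1$ if $q_n=-1$. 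The qualitative point to extract is that inside $S^-$ the coordinate $u$ is frozen while $\gamma u+v$ strictly decreases by the negative amount $u_n$ at each step; since $S^-$ requires $\gamma u+v\ge -(1/2+\gamma)$, no orbit can remain in $S^-$ forever. Hence every sojourn in $S^-$ is finite and terminates with a single $q=-1$ step returning the orbit to $S^+$; likewise an orbit cannot sit in $S^-$ from the start, so $\mathcal{I}^+\ne\emptyset$.

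The core of the argument is then the identity relating consecutive indices of $\mathcal{I}^+$. Let $m\in\mathcal{I}^+$ and let $m'$ be the next index with $u_{m'}\ge 0$. If $q_m=0$ then $m'=m+1$ and $u_{m'}=\rho u_m$. If $q_m=1$ then $u_{m+1}=\rho u_m-1\in[-1,0)$, so the orbit enters $S^-$; by the structure above the ensuing $S^-$-sojourn consists of $q=0$ steps (during which $u$ stays equal to $\rho u_m-1$) followed by a single $q=-1$ step that adds $1$, so $u_{m'}=(\rho u_m-1)+1=\rho u_m$. The case $q_m=-1$ cannot occur because $u_m\ge 0$ puts $\mathbf{x}_m\in S^+$. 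In all cases $u_{m'}=\rho u_m$. Since every element of $\mathcal{I}^+$ has a successor, $\mathcal{I}^+$ is infinite; enumerating $\mathcal{I}^+=\{m_0<m_1<\dots\}$ gives $u_{m_j}=\rho^{\,j}u_{m_0}$, which tends to $0$ as $j\to\infty$ because $0\le\rho<1$. That is exactly the conclusion of the lemma.

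I do not expect a substantial analytic obstacle here; the work is bookkeeping, and two points deserve care. First, one must be certain that the $S^-$-excursion between two returns to $S^+$ contains no $q=1$ step (which would push $u$ below $-1$, outside $S$) and ends with exactly one $q=-1$ step — this is precisely the sign-alternation and $q=\pm1$ characterization in Proposition \ref{invariantregion}, so it can be quoted rather than re-derived. Second, one should check that $\rho u_m-1\in[-1,0)$, which is immediate from $0\le u_m<1$ and $0\le\rho<1$, so that a $q=1$ step genuinely lands the orbit in $S^-$. The one conceptual point, as opposed to case-checking, is recognizing that the $q_m=0$ and $q_m=1$ cases both collapse to the single relation $u_{m'}=\rho u_m$: it is this cancellation (the damping applied on entering $S^+$ survives the whole excursion through $S^-$ and back) that makes the geometric decay of $u$ along $\mathcal{I}^+$ transparent.
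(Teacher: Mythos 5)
Your proof is correct and follows essentially the same route as the paper's: show the orbit must keep returning to $S^+$ (otherwise $q_n=0$ persistently and the iterates leave the bounded set $S$), then use the sign/alternation structure from Proposition \ref{invariantregion} to get $u_{m'}=\rho u_m$ between consecutive returns, hence geometric decay. You simply spell out the bookkeeping (the $-1$ then $+1$ cancellation across an $S^-$ excursion) that the paper compresses into the phrase ``alternating sign pattern of the $q_n$.''
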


 \begin{proof}
First observe that the event ${\bf x}_n \in S^+$, or equivalently $u_n \geq 0$, must keep recurring, for if not, then $q_n = 0$ for all $n$ according to Proposition \ref{invariantregion}, and $v_n = v_0 + (n+1)u_0$ diverges.  We may then assume that the index set ${\cal{I}^+}$ represents an infinite subset of the natural numbers, and $(u_n)_{n \in {\cal I}^+}$ an infinite subsequence of $(u_n)_{\mathbb{Z}^+}$.  Moreover, it is clear from the alternating sign pattern of the $q_n$ that $u_{n_{j+1}} = \rho u_{n_j}$ along indices $n_j$ in $\cal I^+$, so that the subsequence $u_{n_j} = \rho^j u_{n_0}$ converges to zero as $n_j \rightarrow \infty$.
 \end{proof}
 
\noindent  Convergence of the subsequence $(u_n)_{n \in {\cal I}^+}$ does not immediately guarantee convergence of the full sequence $(u_n)_{n \in \mathbb{Z}^+}$, as the residual sequence $(u_n)_{n \in \mathbb{Z}^+ \setminus {\cal I}^+}$ could form an infinite subsequence converging to $-1$ as $k \rightarrow \infty$.   However, we can ensure that this pathological situation does not occur.
 
 \begin{proposition}
 If ${\bf x}_0 \in S$, then ${\bf x}_n = M^n {\bf x}_0$ eventually becomes trapped in $S^+ \subset S$.  Moreover, $\| {\bf x}_n \|  \rightarrow 0$ as $n \rightarrow \infty$. 
 \label{tozero!}
 \end{proposition}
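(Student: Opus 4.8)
### Proof proposal

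The plan is to combine Lemma \ref{utozero}, which already controls the subsequence $(u_n)_{n \in {\cal I}^+}$ of nonnegative $u$-values, with a direct analysis of what happens between consecutive visits to $S^+$. By Lemma \ref{utozero} we know $u_{n_j} = \rho^j u_{n_0} \to 0$ along the infinite index set ${\cal I}^+ = \{n_0 < n_1 < \dots\}$; the only way the full sequence can fail to converge is if one of the ``excursions'' into $S^-$, i.e. a block of consecutive indices $n_j < k < n_{j+1}$ with $u_k < 0$, either has a $u$-coordinate bounded away from $0$ (say near $-1$) or the block grows in length without control. So the first step is to show these excursions are short and shallow, with a length and depth governed by $u_{n_j}$, which is already going to zero.

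Concretely, suppose ${\bf x}_{n_j} \in S^+$ with $u_{n_j} \in [0,1)$ and $q_{n_j} = 1$, so ${\bf x}_{n_j+1} \in S^-$ with $u_{n_j+1} = \rho u_{n_j} - 1 \in [-1, 0)$. During the subsequent excursion in $S^-$, Proposition \ref{invariantregion} tells us $q_k = 0$ until the next $\pm1$, and since we're in $S^-$ the next nonzero $q$ must be $-1$; moreover \emph{no damping is applied} while $u_k < 0$, so the $u$-coordinate is frozen: $u_k = u_{n_j+1}$ for all $k$ in the excursion, while $v$ evolves by $v_{k+1} = v_k + u_k$, decreasing linearly. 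Since ${\bf x}_k \in S^-$ forces $\gamma u_k + v_k \geq -(1/2+\gamma)$, the excursion must terminate: after a number of steps at most $\lceil (\text{something})/|u_{n_j+1}| \rceil$ the inner product $\gamma u + v$ crosses $-1/2$, triggering $q = -1$, after which ${\bf x}$ returns to $S^+$ with new $u$-value $u_{n_{j+1}} = \rho(u_{n_j+1}) + 1 = \rho^2 u_{n_j} - \rho + 1$. Wait — this needs care: the return map on $u$-values restricted to ${\cal I}^+$ must be shown to actually be multiplication by $\rho$ as Lemma \ref{utozero} asserts; I would re-derive the exact bookkeeping, tracking that the net effect of ``enter $S^-$, drift, exit back to $S^+$'' composes the affine pieces to give $u_{n_{j+1}} = \rho\, u_{n_j}$, consistent with the alternating-sign claim. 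The depth of the excursion, $|u_{n_j+1}| = 1 - \rho u_{n_j}$, does \emph{not} go to zero — it tends to $1$ — so the naive hope fails, and this is exactly the subtlety flagged in the paragraph before the Proposition.

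The resolution must instead be: once $u_{n_j}$ is small enough, the iterate ${\bf x}_{n_j}$ lands in a sub-region of $S^+$ from which the map $T$ (or $T\circ(\rho\,\cdot)$) no longer produces a $q = 1$ at all — i.e. the orbit stays in $\Lambda_0$ — and thereafter $u$ is merely damped toward $0$ at every visit to $S^+$ and left alone in $S^-$, with $v$ also driven to a bounded region. So the key steps are: (i) show that when $u_n \geq 0$ is sufficiently small and ${\bf x}_n \in S^+$, the image stays in $S^+$ with $q_n = 0$, so that $u_{n+1} = \rho u_n$ and $v_{n+1} = v_n + \rho u_n$; (ii) deduce ${\bf x}_n$ is eventually trapped in $S^+$ (this gives the first assertion of Proposition \ref{tozero!}); (iii) once trapped in $S^+$ with $q_n \equiv 0$, we have $u_{n+1} = \rho u_n \to 0$ geometrically, and $v_{n+1} = v_n + \rho u_n$ with $\sum \rho u_n < \infty$, so $v_n$ converges to some limit $v_\infty$; (iv) finally argue $v_\infty = 0$ — the limit point $(0, v_\infty)$ is a fixed point of $T$ in $S^+$, and since $A(0,v_\infty) = (0, v_\infty)$ with $q = 0$ requires $\rho|v_\infty| \le 1/2$, but any nonzero $v_\infty$ would make $v_n$ strictly monotone and hence not fixed unless $u$-contributions vanish — a short continuity/fixed-point argument pins $v_\infty = 0$, giving $\|{\bf x}_n\| \to 0$.

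The main obstacle is step (i)–(ii): pinning down the threshold on $u_n$ below which no further $q = 1$ event can occur, and ruling out that the orbit keeps making large-depth excursions into $S^-$ forever despite $u_{n_j} \to 0$. This requires carefully chasing the geometry of $S^+$ against the line $\rho(\gamma u + v) = 1/2$ and verifying that the ``tilt'' introduced by damping strictly shrinks the portion of $S^+$ reachable after a return, so that the $v$-coordinate at the moment of re-entry to $S^+$ is pushed into $\{\rho|\gamma u + v| \le 1/2\}$ once $u$ is small. I expect this to be the one genuinely non-routine estimate; the convergence bookkeeping in (iii)–(iv) is then straightforward geometric-series summation plus a fixed-point uniqueness observation.
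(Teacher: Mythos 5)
Your overall plan (use Lemma \ref{utozero} to make $u$ small at visits to $S^+$, then show the orbit eventually stops emitting $q=\pm 1$ and is trapped in $S^+$) matches the paper's strategy, but the step you yourself flag as the crux --- (i)--(ii) --- has a genuine gap, and the mechanism you propose for it is false. Small $u_n$ alone does not force $q_n=0$: the point of re-entry into $S^+$ after a $q=-1$ event is $(u+1,\,u+v+1)$ with $(u,v)\in S^-$, $\gamma u+v\le -1/2$, so the new value of $\gamma u+v$ equals $(\gamma u+v)+u+\gamma+1$ and can be as large as roughly $\gamma-\tfrac12$ even when the new $u$-coordinate is tiny; for $\gamma\ge 2$ this exceeds $1/(2\rho)$, so $q=1$ can fire immediately upon re-entry (the paper's own case (c) explicitly allows $q_{n+1}\in\{0,1\}$ after a $-1$). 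Hence your hoped-for geometric claim that the re-entry point is ``pushed into $\{\rho|\gamma u+v|\le 1/2\}$ once $u$ is small'' cannot be proved. What is needed instead (and what the paper does) is a dichotomy closed by a drift contradiction: fix $\epsilon\le(1-\rho)/(2\rho^2)$; if at some visit to $S^+$ with $u_0\le\epsilon$ one has $q=0$, a one-line computation shows $\rho(\gamma u_1+v_1)\le \rho/2+\rho^2\epsilon\le 1/2$, so by induction $q\equiv 0$ forever and the orbit is trapped in $S^+$; otherwise $q=1$ at \emph{every} visit to $S^+$, and then bookkeeping the three transition types ($q=1$ from $S^+$, $q=0$ in $S^-$, $q=-1$ from $S^-$, the last never twice in a row) gives $v_{n+2}\le v_n+2\epsilon-1$, so $v_n\to-\infty$, contradicting invariance of the bounded set $S$. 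This contradiction argument is the missing idea in your proposal.

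There is also a secondary error in your step (iii): once trapped in $S^+$ with $q\equiv 0$ and $u\ge 0$, the map is ${\bf x}_{n+1}=T(\rho{\bf x}_n)=\rho A{\bf x}_n$, so $v_{n+1}=\rho(v_n+u_n)$, not $v_{n+1}=v_n+\rho u_n$. With the correct recursion ${\bf x}_n=\rho^nA^n{\bf x}_0=\rho^n(u_0,\,nu_0+v_0)\to 0$ since $n\rho^n\to 0$, and no limit-identification argument is needed; with your recursion $v_n$ would generically converge to a nonzero limit, so your step (iv) could not rescue the conclusion.
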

 
 \begin{proof}
 As a consequence of Lemma \ref{utozero}, we may fix $\epsilon > 0$, and assume without loss that $(u_0, v_0) \in S^+$ and that $u_0 \leq \epsilon$.  We break the proof into two cases:
 \begin{enumerate}

\item Suppose first that $q_1 = 0$, so that $\gamma u_0 + v_0 \leq 1/2$.  Then $u_1 = \rho u_0$, $v_1 = \rho  (v_0 + u_0)$, and
\begin{eqnarray}
\rho  (\gamma u_1 + v_1) \hspace{1mm} =  \hspace{1mm} \rho ^2 (\gamma u_0 + v_0) + \rho ^2 u_0 \hspace{1mm} \leq \hspace{1mm} \rho/2 + \rho ^2 \epsilon \hspace{1mm}  \leq \hspace{1mm} 1/2, \nonumber
\end{eqnarray}
as long as $\epsilon \leq (1 - \rho)/(2 \rho^2)$.  Consequently, ${\bf x}_n \in S^+$ for all $n$, and
$$
{\bf x}_n = \rho^n A^n {\bf x}_0 = \rho^n (u_0, u_0 + n v_0).
$$
Since $n \rho^n \rightarrow 0$, it follows that $\| {\bf x}_n \| \rightarrow 0.$

\item It remains to consider orbits satisfying ${\bf x}_n \in S^+$ if and only if $q_n = 1$.  Such trajectories are constrained as follows:

\begin{enumerate}
\item If ${\bf x}_n \in S^+$, then $q_{n} = 1$, and \\
\vspace{1mm}
\hspace{10mm} $u_{n+1} = \rho u_n - 1 \leq \rho \epsilon - 1$, \\
\vspace{1mm}
\hspace{10mm} $v_{n+1} = \rho v_n + \rho u_n - 1 \leq \rho v_n + \rho \epsilon - 1$\\

\item  If  ${\bf x}_n \in S^-$ and $q_n = 0$, then ${\bf x}_{n+1} \in S^-$, and\\
\vspace{1mm}
\hspace{10mm} $u_{n+1} =  u_{n} \leq \epsilon - 1$,  \\
\vspace{1mm}
\hspace{10mm} $v_{n+1} = v_{n} + u_{n} \leq v_{n} + \epsilon - 1$ \\ 

\item  If ${\bf x}_n \in S^-$ and $q_n = -1$, then $q_{n+1} \in \{0,1\}$, and \\
\vspace{1mm}
\hspace{10mm} $u_{n+1} = u_{n}  + 1 \leq \epsilon$,  \\
\vspace{1mm}
\hspace{10mm} $v_{n+1} =  v_{n} +  u_{n} + 1 \leq v_{n} + \epsilon$ 
\end{enumerate}

Since $(c)$ cannot occur in successive iterations, we arrive at the period-2 inequality
\begin{equation}
v_{n+2} \leq v_n + 2\epsilon - 1,
\end{equation}
indicating that the iterates $v_n$ diverge. This case, then, cannot occur, and we conclude by Case $1$ that $\| {\bf x}_n \| \rightarrow 0$.
\end{enumerate}  
\end{proof}

\subsection{The invariant set $S$ as a global trapping set}
We show now that the invariant set $S$ constructed in the last section is also a global trapping set for the asymmetric map; in light of Proposition $3$, this guarantees that the origin is a globally attracting fixed point for the map $M$.  
\\
\\
\noindent Before proceeding, we will need the following general lemma:
%

\begin{lemma}
Let $S$ be a invariant set for a discrete map $M$ on a set $X$.  Suppose there exists a nonnegative function $V: X \rightarrow \mathbb{R}^+$ and a parameter $\delta > 0$ with the property that for any ${\bf x} \in X \setminus S$, either $M^k {\bf x} \in S$ or $V(M^k {\bf x}) - V({\bf x}) \leq - \delta$ after a finite time $k$. Then, $S$ is a global trapping set for $M$.
\label{globaltrap}
\end{lemma}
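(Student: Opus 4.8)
The plan is to argue by contradiction: suppose $S$ is not a global trapping set, so there is a starting point ${\bf x}_0 \in X$ whose orbit $({\bf x}_n)_{n\geq 0}$ never enters $S$. I want to derive a contradiction from the hypothesis by showing that $V$ must then decrease without bound along this orbit, which is impossible since $V \geq 0$.

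The key observation is that the hypothesis is a statement that can be applied \emph{repeatedly along the orbit}. Concretely, start at ${\bf x}_0 \notin S$. By assumption, there is a finite time $k_0$ such that either $M^{k_0}{\bf x}_0 \in S$ or $V(M^{k_0}{\bf x}_0) - V({\bf x}_0) \leq -\delta$. Since the orbit never enters $S$, the first alternative is impossible, so $V({\bf x}_{k_0}) \leq V({\bf x}_0) - \delta$. Now ${\bf x}_{k_0} = M^{k_0}{\bf x}_0 \notin S$, so I may invoke the hypothesis again, this time starting from ${\bf x}_{k_0}$: there is a finite $k_1$ with $V({\bf x}_{k_0 + k_1}) \leq V({\bf x}_{k_0}) - \delta \leq V({\bf x}_0) - 2\delta$. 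Iterating this construction produces an increasing sequence of times $n_0 = 0 < n_0 + k_0 < n_0 + k_0 + k_1 < \cdots$ (each gap finite) along which $V$ drops by at least $\delta$ each time, so $V({\bf x}_{n_j}) \leq V({\bf x}_0) - j\delta$. For $j > V({\bf x}_0)/\delta$ this forces $V({\bf x}_{n_j}) < 0$, contradicting $V \geq 0$. Hence no such orbit exists: every orbit eventually enters $S$, and since $S$ is invariant it stays there. That is exactly the assertion that $S$ is a global trapping set.

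I should be a little careful about the logical form of the hypothesis when I re-apply it: the statement "either $M^k{\bf x} \in S$ or $V(M^k{\bf x}) - V({\bf x}) \leq -\delta$ after a finite time $k$" is being read as "there exists a finite $k$ such that one of the two holds." Under the standing assumption that the orbit from ${\bf x}_0$ avoids $S$, every point ${\bf x}_n$ on that orbit is in $X\setminus S$ and every forward iterate $M^j{\bf x}_n = {\bf x}_{n+j}$ is also outside $S$, so the alternative "$M^k{\bf x}_n \in S$" is never realized and we always land in the "$V$ drops by $\delta$" case; that is what makes the iteration go through. There is essentially no computational obstacle here — the lemma is a soft, bookkeeping argument — so the only thing to get right is this quantifier handling and the observation that invariance of $S$ upgrades "eventually enters $S$" to "eventually trapped in $S$," which is the definition of a global trapping set.
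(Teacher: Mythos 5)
Your argument is correct and is essentially the same as the paper's proof: assume an orbit starting outside $S$ never enters $S$, apply the hypothesis repeatedly to force $V$ to drop by $\delta$ along the orbit, and conclude $V$ would eventually be negative, a contradiction. Your added remarks on quantifier handling and on invariance upgrading ``eventually enters'' to ``trapped'' are fine and consistent with the paper's (more terse) induction.
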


\begin{proof}
Suppose $V$ satisfies the hypotheses, and that ${\bf x} \in X \setminus S$ is such that $M^k {\bf x} \notin S$ for all $k \geq 0$.   Let $c = V({\bf x})$. From the stated hypotheses, $V(M^{k_1} x) \leq c - \delta$ after some finite time $k_1$, and, by induction,  $V(M^{k_{n}}{\bf x}) \leq c - n \delta$ after a finite time $k_n$ for any positive integer $n$.   But then eventually $V(M^{k}{\bf x}) \leq 0$, which is impossible since $V \geq 0$.
\end{proof}

\noindent Lyapunov functions of the form \eqref{L} presented in Proposition \ref{lyap} do not exist for the map $M$, or even the symmetric map $T$, as we invite the reader to verify.  Instead, we follow the approach in \cite{SidongThesis} where trapping sets for the second-order $\Sigma \Delta$ scheme \eqref{second-order} are constructed, in the slightly different setting where quantizer $Q(u) = $sign$(u)$ is considered instead of tri-level quantizer $Q_{tri}$, and  we consider the following Lyapunov function:
\begin{equation}
V(u,v) = u^2 + |2v - u|.
\end{equation}
The motivation for this $V$ is as follows: letting $V^+(u,v) = u^2 + 2v - u$ and $V^-(u,v) = u^2 -2v + u$, it is easily verified that $V(u,v) = \max{ \{V^+(u,v), V^-(u,v) \} }$.  Also, $V^+$ and $V^-$ are the unique functional solutions to the equations
\begin{eqnarray}
\left. V^+(T(u,v)) \right \vert_{(u,v): \gamma u + v \geq 1/2} &=& V^+(u,v),  \nonumber \\ 
\left. V^-(T(u,v)) \right \vert_{(u,v): \gamma u + v \leq -1/2} &=& V^-(u,v). \nonumber
\end{eqnarray}
As it turns out, the set of points ${\bf x}$ for which $V$ may have positive forward difference under iteration of the map $T$, $\Delta V(Tx, x) = V(Tx) - V(x) > 0$, is contained in the invariant set $S$ of  Proposition \ref{invariantregion}. 

 \begin{proposition}
Consider the map $T$ of Theorem \ref{mainthm}, the convex function $V: \mathbb{R}^2 \rightarrow \mathbb{R}^2$ given by $V(u,v) = u^2 + | 2v - u|$, and the set $R = R_1 \cup R_2$, with
 \begin{eqnarray}
 R_1 &=& \{ (u,v) \in \mathbb{R}^2: \hspace{3mm} \gamma u + v \geq 0, \hspace{3mm}  2v + u \leq 1, \hspace{3mm}  u \leq 1/2 \},  \nonumber \\
 R_2 &=& \{ (u,v) \in \mathbb{R}^2 : \hspace{3mm} \gamma u + v < 0, \hspace{3mm} 2v + u \geq -1, \hspace{3mm} u \geq -1/2 \}.
 \nonumber
 \end{eqnarray}
If $\Delta V(T {\bf x}, {\bf x}) > 0$, then ${\bf x} \in R$.  Moreover $R$ is contained in the invariant set $S$ of Proposition \ref{invariantregion}.
 \label{R}
 \end{proposition}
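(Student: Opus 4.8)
The plan is to compute the forward difference $\Delta V(T{\bf x}, {\bf x})$ explicitly in each of the three regions $\Lambda_1, \Lambda_0, \Lambda_{-1}$ (defined here without damping, i.e.\ for the undamped map $T$, so the relevant switching thresholds are $\gamma u + v \gtrless \pm 1/2$), and show that the only way to get $\Delta V > 0$ is to land in $R$. Since $V = \max\{V^+, V^-\}$, I would first record that, whichever branch of $T$ is taken, $V(T{\bf x}) \geq V^{\pm}(T{\bf x})$, and in the two ``extreme'' regions one of $V^+$ or $V^-$ is \emph{preserved} exactly by $T$: on $\{\gamma u + v \geq 1/2\}$ we have $V^+(T{\bf x}) = V^+({\bf x})$, and on $\{\gamma u + v \leq -1/2\}$ we have $V^-(T{\bf x}) = V^-({\bf x})$. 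This is the key structural fact that makes the computation tractable.

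First I would handle $\Lambda_1$, i.e.\ $\gamma u + v \geq 1/2$, where $T{\bf x} = A{\bf x} - {\bf 1} = (u-1, u+v-1)$. Here $V^+(T{\bf x}) = V^+({\bf x})$, so $\Delta V^+ = 0$, while a direct computation gives $\Delta V^-(T{\bf x},{\bf x}) = V^-(u-1,u+v-1) - V^-(u,v)$, which after expanding $V^-(u,v) = u^2 - 2v + u$ simplifies to a linear (actually affine) expression in $u$ of the form $-2u + (\text{const})$ — I expect something like $\Delta V^- = -2(\gamma u + v) + \text{lower order}$, or at any rate a quantity that is $\leq 0$ once $\gamma u + v \geq 1/2$ and $\gamma \geq 1$. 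The upshot should be: on $\Lambda_1$, $V(T{\bf x}) = \max\{V^+, V^-(T{\bf x})\}$, and since $V^+$ is preserved and $V^-$ decreases (or the active branch switches to $V^+$ which is preserved), $\Delta V \leq 0$. The region $\Lambda_{-1}$ is handled identically by the symmetry $(u,v) \mapsto (-u,-v)$, which swaps $V^+ \leftrightarrow V^-$ and the two half-planes. So $\Delta V > 0$ forces ${\bf x} \in \Lambda_0$, i.e.\ $|\gamma u + v| < 1/2$ (with $q = 0$, $T{\bf x} = A{\bf x} = (u, u+v)$).

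On $\Lambda_0$ the map is just $A$, and here neither $V^+$ nor $V^-$ is automatically preserved, so I would compute both: $\Delta V^+(A{\bf x},{\bf x}) = V^+(u,u+v) - V^+(u,v) = 2u$ and $\Delta V^-(A{\bf x},{\bf x}) = V^-(u,u+v) - V^-(u,v) = -2u$. Then $\Delta V(A{\bf x},{\bf x}) = \max\{V^+(A{\bf x}), V^-(A{\bf x})\} - \max\{V^+({\bf x}), V^-({\bf x})\}$, and one bounds this above by carefully tracking which of $V^{\pm}$ is active before and after. The condition $\Delta V > 0$ then translates into the sign conditions $2v + u \leq 1$, $u \leq 1/2$ (when $V^+$ is active after, forcing $u > 0$, so $\gamma u + v \geq 0$ is automatic together with $|\gamma u+v|<1/2$, actually $\gamma u + v \geq 0$ needs a small argument) — i.e.\ exactly the defining inequalities of $R_1$ — and symmetrically $R_2$. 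Concretely: $\Delta V > 0$ with $V^+$ active after means $u^2 + 2(u+v) - u > \max\{V^+({\bf x}), V^-({\bf x})\} \geq V^-({\bf x}) = u^2 - 2v + u$, giving $2u + 4v - 2u$... I should just grind the cases, but the inequalities $2v + u \leq 1$ and $u \leq 1/2$ will emerge from comparing $V^+(A{\bf x})$ against $V^+({\bf x})$ and against the boundary $V^+ = V^-$. Combining $\Lambda_0$ with these sign constraints yields ${\bf x} \in R_1 \cup R_2 = R$.

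Finally, for $R \subset S$: this is a routine containment check between explicitly-defined polygonal regions. Given ${\bf x} \in R_1$, I have $\gamma u + v \geq 0 > -1/2$, and from $|\gamma u + v| < 1/2$ (inherited because $R$ was derived inside $\Lambda_0$) together with $0 \leq u \leq 1/2 < 1$ I get $-1/2 \leq \gamma u + v \leq 1/2 \leq 1/2 + \gamma$, so ${\bf x} \in S^+$; note one should double-check $u \geq 0$ on $R_1$, which follows since $V^+$ being active after $A$ forced $\Delta V^+ = 2u > 0$. Symmetrically $R_2 \subset S^-$. Hence $R \subset S$.

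The main obstacle I anticipate is the $\Lambda_0$ case: because $V = \max\{V^+, V^-\}$ and the active branch can flip under $A$, one must split into subcases according to the sign of $2v - u$ before iteration and the sign of $2(u+v) - u = u + 2v$ after, and in each subcase extract the right linear inequality. Getting the four defining inequalities of $R$ to come out cleanly — rather than some messier superset — is where care is needed; the $\Lambda_1$ and $\Lambda_{-1}$ cases should be comparatively mechanical given the exact-preservation identities for $V^{\pm}$.
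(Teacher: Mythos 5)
There is a genuine gap: your structural claim that $\Delta V(T{\bf x},{\bf x}) \le 0$ on all of $\Lambda_1$ (and, by symmetry, on $\Lambda_{-1}$), so that $\Delta V>0$ forces ${\bf x}\in\Lambda_0$, is false, and it cannot be patched without reinstating exactly the case analysis you hoped to avoid. On the $q=1$ branch one indeed has $V^+(T{\bf x})=V^+({\bf x})$, but the companion computation gives $\Delta V^-(T{\bf x},{\bf x}) = 2-4u$, not a multiple of $\gamma u+v$; it is $\le 0$ precisely when $u\ge 1/2$, which is the negation of one of the defining inequalities of $R_1$ and does not follow from $\gamma u+v\ge 1/2$. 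Concretely, for $\gamma=5$ and $(u,v)=(0.2,0.3)\in\Lambda_1\cap R_1$ one has $V(0.2,0.3)=0.44$, while $T(0.2,0.3)=(-0.8,-0.5)$ and $V(-0.8,-0.5)=0.84$, so $V$ increases strictly inside $\Lambda_1$. This is not a pathology: the triangles $R_1,R_2$ deliberately straddle the switching lines $\gamma u+v=\pm 1/2$, and the paper's proof accordingly works with $\Lambda_1\setminus R_1$ and $\Lambda_0\setminus R_1$ (and the symmetric sets), extracting the needed inequalities $u+2v\ge 1$ or $u\ge 1/2$ precisely from membership in the complement of $R_1$, with subcases on the signs of $2v-u$ and $u+2v-1$. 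Your $\Lambda_0$ step is essentially fine (there $\Delta V = |2v+u|-|2v-u|$, positive iff $uv>0$, and that set sits inside $R$), but the $\Lambda_1$/$\Lambda_{-1}$ step is where the content of the proposition lives, and your plan has no mechanism to produce it.

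A secondary problem is the containment $R\subset S$: you argue $R_1\subset S^+$ using $u\ge 0$ and $|\gamma u+v|<1/2$ ``inherited from $\Lambda_0$,'' but $R_1$ as defined is contained neither in $\Lambda_0$ nor in the half-plane $u\ge 0$ (for $\gamma=5$, $(-0.1,0.5)\in R_1$ has $u<0$, and $(0.2,0.3)\in R_1$ has $\gamma u+v=1.3>1/2$). The containment is still true, but it must be checked from the defining inequalities of $R_1$ itself: for $0\le u\le 1/2$ the bounds $2v+u\le 1$ and $\gamma u+v\ge 0$ give $-1/2\le \gamma u+v\le \gamma/2+1/2\le \gamma+1/2$, so the point lies in $S^+$; for $u<0$ the same two inequalities give $|u|(\gamma-\tfrac12)\le \tfrac12$, hence $-1\le u<0$, together with $0\le\gamma u+v\le 1/2$, so the point lies in $S^-$; symmetrically for $R_2$.
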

\noindent  We defer the proof of Proposition \ref{R}, which amounts to a straightforward case by case analysis, to Section $5$.  The set $R$ is displayed in Figure \ref{fig:stable3}, along with the invariant set $S$.

 \begin{figure}[!h]
 \begin{center}
 \includegraphics[width=10cm, height=7cm]{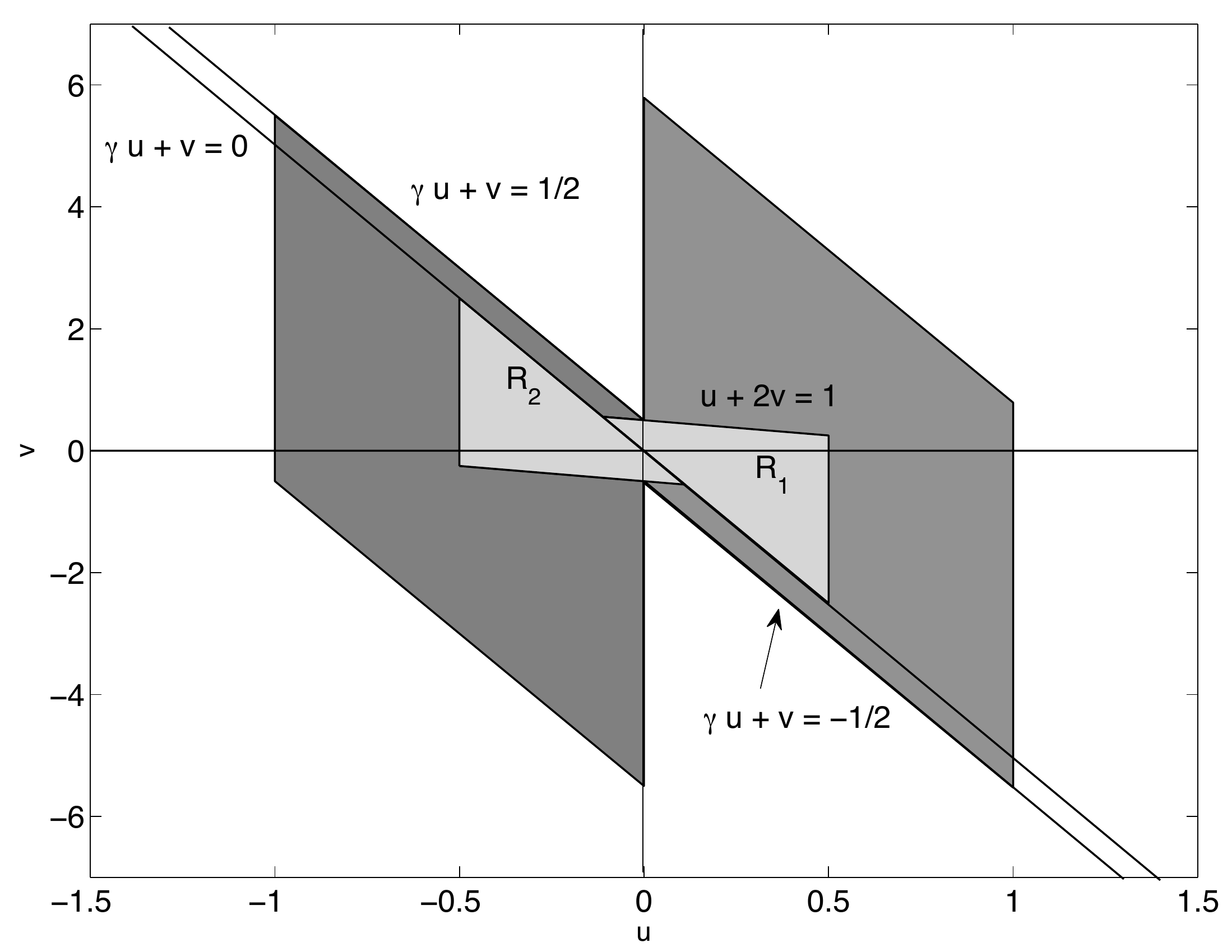}
 \caption{The region $\{(u,v): T(V(u,v)) - T(u,v) > 0\}$ over which the Lyapunov function $V(u,v) = u^2 + | 2v - u|$ may have positive forward difference under the map $T$ is contained in $R$, which is represented by light gray triangles, and is superimposed over the global trapping set $S$.  The parameter used above is $\gamma = 5$.  }
 \label{fig:stable3} 
 \end{center}
 \end{figure}
 
With Proposition \ref{R} and Lemma \ref{globaltrap} in hand,  we are now ready to prove the main result of this section.  

\begin{proposition}
The invariant set $S$ is a global trapping set for the asymmetrically-damped map $M$.
\label{globaltrap2}
\end{proposition}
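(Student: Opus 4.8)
\noindent The plan is to invoke Lemma~\ref{globaltrap}. Its invariance hypothesis for $S$ is already supplied by Proposition~\ref{invariantregion}, so what remains is to produce a nonnegative function and a constant $\delta>0$ realising the stated dichotomy. I would take $V(u,v)=u^2+|2v-u|$, the Lyapunov function of Proposition~\ref{R}, and use three elementary properties of it. First, $V$ is coercive: $V(u,v)\le c$ forces $|u|\le\sqrt{c}$ and $|v|\le(c+\sqrt{c})/2$, so every sublevel set $\{V\le c\}$ is bounded. Second, $V$ contracts under homothety: $V(\rho{\bf x})=\rho^2u^2+\rho|2v-u|\le\rho V({\bf x})$ for $0\le\rho\le1$. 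Third, $S$ contains a neighbourhood of the origin --- if $|u|,|v|<r$ with $(\gamma+1)r<1/2$ then $(u,v)\in S^+$ when $u\ge0$ and $(u,v)\in S^-$ when $u<0$ --- so there is $\eta>0$ with $\{V\le\eta\}\subseteq S$.

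\medskip
\noindent Now let ${\bf x}_0\notin S$ and suppose ${\bf x}_n:=M^n{\bf x}_0\notin S$ for all $n\ge0$; the goal is to exhibit a finite $k$ with $V({\bf x}_k)-V({\bf x}_0)\le-\delta$, where $\delta:=(1-\rho)\eta>0$. The key point is that $V$ is non-increasing along this orbit and strictly contracting at every step where $u_n\ge0$. Indeed, if $u_n<0$ then ${\bf x}_{n+1}=T{\bf x}_n$ and ${\bf x}_n\notin S\supseteq R$, so $V({\bf x}_{n+1})\le V({\bf x}_n)$ by Proposition~\ref{R}. If $u_n\ge0$ then ${\bf x}_{n+1}=T(\rho{\bf x}_n)$; here $\rho{\bf x}_n\notin S$, for otherwise $T(\rho{\bf x}_n)\in S$ by the $T$-invariance of $S$ noted in the proof of Proposition~\ref{invariantregion}, contradicting ${\bf x}_{n+1}\notin S$, so $\rho{\bf x}_n\notin R$ and hence $V({\bf x}_{n+1})\le V(\rho{\bf x}_n)\le\rho V({\bf x}_n)$ by the second property. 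Consequently $V({\bf x}_n)\le V({\bf x}_0)$ for all $n$, and coercivity forces the orbit to be bounded.

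\medskip
\noindent It remains to show that $u_n\ge0$ happens for some $n$. If not, then $M=T$ along the whole orbit, so $u_{n+1}=u_n-q_n$, $v_{n+1}=u_n+v_n-q_n$ with $q_n\in\{-1,0,1\}$; thus $u_n\in u_0+\mathbb{Z}$, and boundedness leaves only finitely many values for $u_n$, all strictly negative, say $u_n\le-\theta$ for some $\theta>0$. Telescoping the two recursions yields $v_n=v_0-u_0+u_n+\sum_{m=0}^{n-1}u_m\le(v_0-u_0+u_n)-n\theta\to-\infty$, contradicting boundedness. Hence there is a first index $n$ with $u_n\ge0$; at that step $V({\bf x}_{n+1})\le\rho V({\bf x}_n)$, while ${\bf x}_n\notin S$ forces $V({\bf x}_n)>\eta$ by the third property, so $V({\bf x}_0)-V({\bf x}_{n+1})\ge V({\bf x}_n)-V({\bf x}_{n+1})\ge(1-\rho)V({\bf x}_n)>\delta$. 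This is the decrease required by Lemma~\ref{globaltrap}, so $S$ is a global trapping set; combined with Proposition~\ref{tozero!}, this proves Theorem~\ref{mainthm}.

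\medskip
\noindent The main obstacle is obtaining a single $\delta$ good for all initial points: the damped step decreases $V$ only by $(1-\rho)V({\bf x}_n)$, which degenerates as the iterate approaches the origin. This is exactly why the third property matters --- a small value of $V$ already places the iterate inside $S$ --- so the competing scenarios "$V$ still at least $\eta$, yielding a uniform drop" and "$V$ below $\eta$, so the orbit has entered $S$" together cover all cases. The only remaining delicate point is excluding orbits that forever avoid both $S$ and the half-plane $\{u\ge0\}$ on which the damping acts, and the telescoping estimate driving $v_n\to-\infty$ against the coercivity-supplied boundedness handles that.
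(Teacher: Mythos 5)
Your proof is correct, and it shares the paper's main architecture --- Lemma~\ref{globaltrap} with the Lyapunov function $V(u,v)=u^2+|2v-u|$, the inclusion $R\subseteq S$ from Proposition~\ref{R}, the contraction $V(\rho{\bf x})\le\rho V({\bf x})$ at damped steps, and a sublevel set $\{V\le\eta\}\subseteq S$ supplying the uniform $\delta=(1-\rho)\eta$ --- but it diverges from the paper in how orbits confined to the left half-plane $\{u<0\}$ are handled. The paper argues pointwise: for ${\bf x}\notin S$ with $u<0$ it splits into the region where $q=0$, where an explicit computation gives $V(M{\bf x})-V({\bf x})\le-2$, and the regions where $q=\pm1$, where it shows a trajectory cannot remain indefinitely (the $u_k=u_0+k$ argument). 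You instead run a single orbit-level argument: monotonicity of $V$ outside $S$ (via the contrapositive of Proposition~\ref{R}, exactly as the paper uses it) plus coercivity of $V$ bounds the orbit, and then the integrality of the increments $u_{n+1}=u_n-q_n$ together with the telescoping identity $v_n=v_0-u_0+u_n+\sum_{m=0}^{n-1}u_m$ forces $v_n\to-\infty$ if the orbit never reaches $\{u\ge0\}$, a contradiction; the uniform drop then comes solely from the first damped step, where $\rho{\bf x}_n\notin S$ (by $T$-invariance of $S$) and $V({\bf x}_n)>\eta$. This buys you a more unified argument that avoids the paper's explicit $\Lambda_0$ case computation and makes fully explicit the finite-time escape from the left half-plane, which the paper treats rather tersely; the paper's version, in exchange, yields a concrete one-step decrease of size $2$ on part of the left half-plane and localizes the case analysis to single iterates rather than whole orbits. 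Your verification that $\{V\le\eta\}\subseteq S$ for small $\eta$ (which the paper only asserts) is also a welcome addition.
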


\begin{proof}
For sufficiently small $\epsilon > 0$, the set $\{ {\bf x}: V({\bf x}) \leq \epsilon \}$ is contained in $S$.  We will verify the conditions in Lemma \ref{globaltrap} for $S$ and $V$, using $\delta := \epsilon (1 - \rho) > 0$.  The proof is split into three cases. 

\begin{enumerate}
\item Suppose first that ${\bf x} = (u,v)$ lies in the positive half plane $u \geq 0$ where $M {\bf x} = T(\rho {\bf x})$.  If ${\bf x}$ is not in the invariant set $S$ but ${\bf x}' = \rho {\bf x}$ is, then $M {\bf x} = {\bf T x}' \in S$ by invariance of $S$ for $T$.  If on the other hand $\rho {\bf x} \notin S$, then
\begin{eqnarray}
V(M {\bf x}) - V({\bf x}) &=&  V\big( T(\rho {\bf x} ) \big) - V({\bf x}) \nonumber \\
&\leq& V(\rho {\bf x}) - V({\bf x}),  \hspace{5mm} \textrm{by Proposition \ref{R}, }   \nonumber \\
&\leq& \rho V({\bf x}) - V({\bf x}), \hspace{5mm}  \textrm{by convexity of $V$ and $V(0) = 0$} \nonumber \\ 
&=& -(1 - \rho) V({\bf x}) \nonumber \\
&\leq& -\epsilon(1 - \rho), \hspace{5mm} \textrm{as $V({\bf x}) > \epsilon$ if ${\bf x} \notin S$} \nonumber \\
&=& -\delta. \nonumber
\end{eqnarray}
Thus, if ${\bf x}$ is in the positive half plane but not in $S$, then either $M {\bf x} \in S$ or $V( M {\bf x}) - V({\bf x}) \leq  - \delta$.  

\item Suppose now that ${\bf x}$ is in the negative half plane, $u < 0$, and also in the set $\Lambda_0 \setminus S$.  We compare $V({\bf x})$ and $V(M {\bf x})$ explicitly:
$$
V(u,v) = u^2 + | 2v - u |, \hspace{5mm} V\big( M(u,v) \big) = V(u, u + v) = u^2 + | 2v + u |.
$$
By inspection of Proposition \ref{invariantregion}, $u$ and $v$ must have opposite signs in this region, $| u | \geq 1$, and $| v | \geq \gamma \geq 1$. It follows that
$$
V \big(M(u,v) \big) - V(u,v) \leq - 2.
$$

\item We have verified the assumptions of Lemma \ref{globaltrap} for all ${\bf x} \in \mathbb{R}^2 \setminus S$ whose trajectories are either eventually contained in $S$ or in the right half plane or in the left half plane intersected with the region $\Lambda_0$.   In fact, \emph{all} ${\bf x} \in \mathbb{R}^2$ can be described as such.  Assume for purposes of contradiction that there exists a point ${\bf x}$ whose \emph{entire trajectory} ${\bf x}_k = M^k {\bf x}$ lies in $\Lambda_{-1} \cap \{(u,v): u < 0\}$, so that $q_k = -1$ for all $k \geq 0$.  But then $u_k = u_0 + k$ becomes arbitrarily large and positive for increasing $k$, an obvious contradiction to the assumption that $u_k < 0$ for all $k$.  This is a contradiction to the assumption that $\gamma u_k + v_k \leq -1/2$, and this case is rendered impossible.  The same argument obviates the possibility that any trajectory ${\bf x}_k = M^k {\bf x}$ lies entirely in $\Lambda_{1} \cap \{(u,v): u < 0\}$.
\end{enumerate}

We conclude that after a finite number of iterations $k$, either $M^k \in S$ or $V(M {\bf x}) - V({\bf x}) \leq -\delta$. 
\end{proof}

\noindent Theorem \ref{mainthm} follows from Proposition \ref{globaltrap2} and Proposition \ref{tozero!}.

\begin{figure}[!h]
\begin{center}
\includegraphics[width=3in]{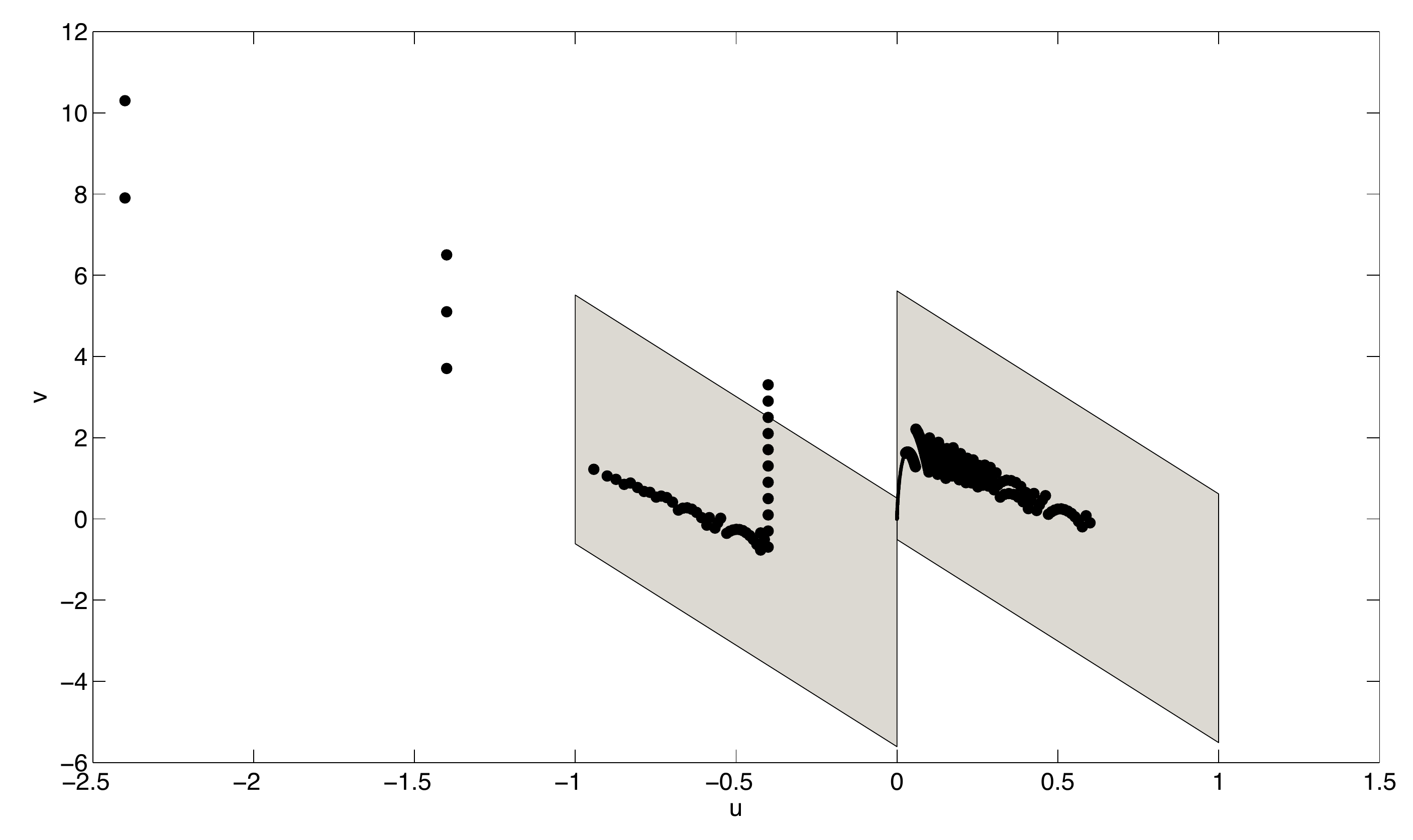}
\includegraphics[width=3in]{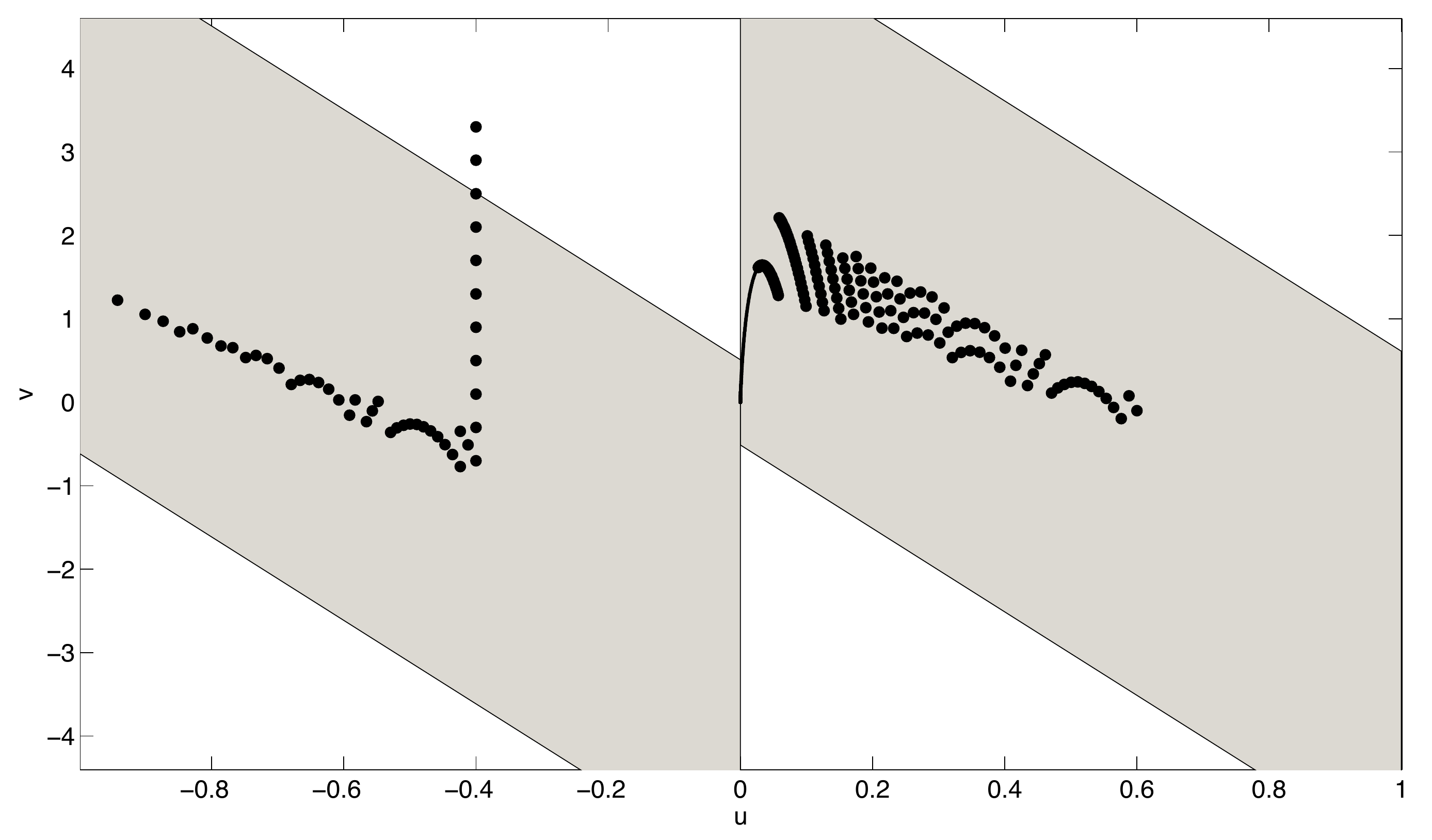}
\caption{ \small Different magnifications of an orbit of the map $M$ for $\rho = .98$ and $\gamma = 5$.  The initial point $(u_0, v_0) = (-3.4, 12.7)$ can be seen in the top image; this point and the first few iterations are outside the trapping set $S$ (gray parallelograms).   Once trapped in $S$, the iterates $(u_n, v_n)$ converge to the globally attracting fixed point $(0,0)$.}
\label{fig:magnifty_orbit}
\end{center}
\end{figure}
 
\section{Conclusions and open problems}
The class of piecewise affine maps in Theorem \ref{mainthm} for which we are able to prove global convergence is very restrictive; indeed, numerical experiments suggest that attractivity holds over a \emph{much} broader class of piecewise affine asymmetric maps.  Interestingly, the most natural generalizations of Theorem \ref{mainthm} from a theoretical perspective are also physically meaningful in the context of $\Sigma \Delta$ quantization.   For instance, the symmetric piecewise affine map $T$ of Theorem \ref{mainthm} corresponding to the original second-order $\Sigma \Delta$ scheme has three affine regions, corresponding to $q=-1$, $0$, and $1$, while the asymmetric map $M$ that is constructed to have a global attracting fixed point has \emph{six} affine regions.  Numerical results suggest that six regions are not necessary, and that attractivity holds for asymmetric maps consisting of three affine regions only including maps of the form,
\begin{eqnarray}
\label{3level}
{\bf x}_{n+1} &:=& \left\{
\begin{array}{ll}
A {\bf x}_n + {\bf 1}, & \textrm{if } \scalprod{{\bf c}}{{\bf x}_n} \leq -\tau \\

A \rho {\bf x}_n, &  \textrm{if } | \scalprod{{\bf c}}{{\bf x}_n} | < \tau \\

A \rho {\bf x}_n - {\bf 1}, & \textrm{if } \scalprod{{\bf c}}{{\bf x}_n} \geq \tau.
\end{array}\right.
\end{eqnarray}
Note that the value $\tau = 1/2$ is no longer constrained; this is because global attractivity of the origin seems to independent of the particular value of $\tau > 0$. This is important in practice if the recursion is to be built into an analog circuit, where thermal fluctuations and other non-idealities cause fluctuations in all of the analog circuit components.  
 
 We conjecture that the origin is also a globally attracting fixed point for four-level  asymmetric maps of the form, 
\begin{eqnarray}
\label{fourlevel2}
(u_{n+1}, v_{n+1})
&=& \left\{
\begin{array}{ll} 
T(\rho_1 u_n, \delta_1 v_n), & \textrm{if } \scalprod{{\bf c}}{(\rho_1 u_n, \delta_1 v_n)} < 0, \\
T(\rho_1\rho_2 u_n, \delta_1\delta_2 v_n), & \textrm{if } \scalprod{{\bf c}}{(\rho_1 u_n, \delta_1 v_n)} \geq 0, 
\end{array}\right.
\end{eqnarray}
where $\rho_1, \rho_2, \delta_1, \delta_2$ are positive parameters less than or equal to $1$, with $\delta_2$ being strictly less than one. This more general framework is a natural extension of Theorem \ref{mainthm}, and also serves as a more realistic model for $\Sigma \Delta$ quantization when implemented in analog circuitry, where, after one clock time, a small amount of \emph{integrator leakage} on each of the two delay elements required to hold each of the states $u_n$ and $v_n$ is unavoidable.  Integrator leakage has the effect of reducing the stored input in the first delay to a fraction $\rho$ of its original value, and reducing the stored input in the second delay by a fraction $\delta$ of its original value; in most circuits of interest, $(\rho, \delta) \in [.95,1]^2$, but the specific leakage factors within this window are generally unknown and may vary slowly in time (see \cite{GregTem}, p. 485, and also \cite{rward}).   The model  \eqref{fourlevel2} also allows for the possibility of inducing leakage on the second delay element only, $(u_n, v_n) \rightarrow (u_n, \rho v_n)$, thus simplifying the recursion.
\\
\\
The class of four-level maps \eqref{fourlevel2} are more complex than the three-level maps \eqref{3level}, but exhibit a much faster speed of convergence of trajectories to the origin.  Indeed, the issue of speed of the convergence is a very important question in itself which we have not yet addressed. For the asymmetric map $M$ in Theorem \ref{mainthm}, the rate of convergence of trajectories to the fixed point ${\bf x} = {\bf 0}$, considered as a function of the discrete time $n$, will invariably decrease as $\rho \rightarrow 1$.  However, it is not clear whether this rate may be constant as a function of the \emph{sampling rate} $\lambda$, when the input $f_n = f(\frac{n}{\lambda})$ represent samples of a band-limited function, and $\rho = 1 - \frac{1}{\lambda}$.  This is an interesting direction for future work. 
\\
\\
\noindent At this point, let us step back to the original motivation for this work, which was to suppress periodicities and quasiperiodicities in the discrete output sequence $q_n$ in the $\Sigma \Delta$ recursion \eqref{second-order} at the onset of stretches of low-amplitude $| f_n^{\lambda} | << 1$.  Our assumption that $f^{\lambda}_n = 0$ over a segment of time is only an idealization of this situation, and the appropriate generalization of `quietness'  for general low-amplitude input is not immediately clear.  However, at the onset of zero-mean input having sufficiently small amplitude and sufficiently high frequency of oscillation, we conjecture that $q_n = 0$ remains a `fixed state'; this  situation occurs for instance at the onset of vanishing input subject to additive noise. 
\\
\\
Finally, it would be interesting to generalize our results to higher dimensional piecewise affine maps, such as the maps corresponding to higher order $\Sigma \Delta$ recursions, \eqref{eqn:sd}. 

 \section{Proof of Proposition \ref{R}}
 
 In this section we prove Proposition \ref{R}, showing that the region where Lyapunov function $V(u,v) = u^2 + | 2v - u |$ has positive forward difference is contained in the set $R = R_1 \cup R_2$ defined by
\begin{eqnarray}
 R_1 &=& \{ (u,v) \in \mathbb{R}^2: \hspace{3mm} \gamma u + v \geq 0, \hspace{3mm}  2v + u \leq 1, \hspace{3mm}  u \leq 1/2 \},  \nonumber \\
 R_2 &=& \{ (u,v) \in \mathbb{R}^2 : \hspace{3mm} \gamma u + v < 0, \hspace{3mm} 2v + u \geq -1, \hspace{3mm} u \geq -1/2 \}.
 \nonumber
 \end{eqnarray}
 
 \begin{proof}
 That $R$ is contained in the trapping set $S$ is straightforward.  Regarding the first part of the proposition, consider the partition of $\mathbb{R}$ into $\Lambda_1 = \{ (u,v) \in \mathbb{R}^2: \gamma u + v \geq 1/2 \}$, $\Lambda^+_0 = \{ (u,v) \in \mathbb{R}^2: 0 \leq  \gamma u + v  < 1/2 \}$, $\Lambda^-_0 = \{ (u,v) \in \mathbb{R}^2: -1/2 \leq  \gamma u + v  < 0 \}$, and $\Lambda_{-1} = \{ (u,v) \in \mathbb{R}^2: \gamma u + v \leq -1/2 \}$.  Suppose that $(u,v) \in \Lambda_1 \setminus R_1$, so that $(u',v') =T(u,v) = (u - 1, u + v - 1)$.  Our first aim is to show that $V(T(u,v)) \leq V(u,v)$ in this situation.  
 \begin{enumerate}
\item {\bf Case 1:} If $2v - u \geq 0$, then $V(u,v) = u^2 + 2v - u$, while $V(u',v') = u^2 - 2u + 1 + | u + 2v -1|$, so 
$$
V(u',v') \leq V(u,v) \Longleftrightarrow | u + 2v - 1| \leq u + 2v  - 1 \Longleftrightarrow u + 2v \geq 1.
$$  
But since $(u,v) \in \Lambda_1 \setminus R_1$, we know that $u > 1/2$, so $u + 2v \geq 2u > 1$, and $V(u',v') \leq V(u,v)$ holds in this case.
\item {\bf Case 2:} If $2v - u \leq 0$, then $V(u,v) = u^2 + u - 2v$, while the expression for $V(u',v')$ remains unchanged; thus,
\begin{equation}
V(u',v') \leq V(u,v) \Longleftrightarrow | u + 2v - 1 | \leq 3u - 2v - 1.
\label{cond}
\end{equation}
We split this case into two subcases:
\begin{enumerate}
\item {\bf Case 2(a)}: If, on the other hand, $u + 2v > 1$, then $|u + 2v - 1| = u + 2v - 1$, and \eqref{cond} simplifies to 
$$
V(u',v') \leq V(u,v) \Longleftrightarrow  u + 2v - 1 \leq 3u - 2v - 1  \Longleftrightarrow 2v \leq u.
$$
 But since $2v - u \leq 0$ by assumption, the result holds in this subcase.

\item {\bf Case 2(b)}: If $u + 2v \leq 1$, then 
\begin{eqnarray}
V(u',v') \leq V(u,v) &\Longleftrightarrow&  -u - 2v + 1 \leq 3u - 2v - 1 \nonumber \\
&\Longleftrightarrow& u \geq 1/2.
\end{eqnarray}
But of course, the condition $u \geq 1/2$ holds throughout $\Lambda_1 \setminus R_1$.
\end{enumerate}
\end{enumerate}

\noindent We have shown thus far that $V(u',v') \leq V(u,v)$ if $(u,v) \in \Lambda_1 \setminus R_1$.  It remains to show that $V(u, u + v) \leq V(u,v)$ if $(u,v) \in \Lambda_0 \setminus R_1$. By inspection of Figure $4$, this region consists of two disjoint sets: $P_1:  \{ (u,v): 0 \leq \gamma u + v \leq 1/2, u + 2v \geq 1, u \leq 0 \}$, and $P_2 :  \{ (u,v): 0 \leq \gamma u + v \leq 1/2, u + 2v \leq -1,  u \geq 0 \}.$

\begin{enumerate}
\item {\bf Case 1: $(u,v) \in P_1$}: As $P_1 \subset \{ (u,v): u \leq 0, v \geq 0 \}$, the restriction $2v - u \geq 0$ trivially holds, and so $h(u,v) = u^2 + 2v - u$, and
\begin{eqnarray}
h(u,u+v) \leq h(u,v) &\Longleftrightarrow& u^2 + | 2(u + v) - u | \leq u^2 + 2v - u  \nonumber \\        &\Longleftrightarrow& |2v + u| \leq 2v - u \nonumber \\
&\Longleftrightarrow& 2v + u \leq 2v - u \nonumber \\
&\Longleftrightarrow& u \leq 0 \nonumber
\end{eqnarray}
which is satisfied by assumption.  
\item {\bf Case 2: $(u,v) \in P_2$ }: $P_2$ is contained in the quadrant $\{ (u,v): u \geq  0, v \leq 0 \}$, and so $2v - u < 0$, $h(u,v) = u^2 - 2v + u$, and
\begin{equation}
h(u,u+v) \leq h(u,v) \Longleftrightarrow | u + 2v | \leq u - 2v \Longleftrightarrow -(u + 2v) \leq u - 2v \Longleftrightarrow u \geq 0, \nonumber
\end{equation}
which again is satisfied by assumption.
\end{enumerate}
By symmetry of the set $R$ and the map $T$, the symmetric result, that $V(T(u,v)) \leq V(u,v)$ if $(u,v) \in \Lambda_{-1}  \setminus R_2$, also holds.

\end{proof}

\subsection*{Acknowledgments}
The author would like to thank Ingrid Daubechies, Sinan Gunturk, and Felix Krahmer for invaluable  discussions on this topic. She is grateful to the American Institute of Mathematics for holding the conference, ``Frames for the finite world: Sampling, coding, and quantization," where this project originated.

\bibliography{QSD_thirddraft}
\bibliographystyle{abbrv}

\end{document}